\documentclass[paper,onefignum,onetabnum]{siamart171218}

\usepackage{lipsum}
\usepackage{amsfonts}
\usepackage{graphicx}
\usepackage{epstopdf}
\usepackage{algorithmic}

\usepackage{array}
\usepackage{tikz}
\usepackage{xcolor}
\usepackage{hyperref}
\usepackage{enumitem}
\usepackage{amssymb}
\usepackage{subcaption}

\ifpdf
  \DeclareGraphicsExtensions{.eps,.pdf,.png,.jpg}
\else
  \DeclareGraphicsExtensions{.eps}
\fi

\newsiamremark{remark}{Remark}
\newsiamremark{hypothesis}{Hypothesis}
\crefname{hypothesis}{Hypothesis}{Hypotheses}
\newsiamthm{claim}{Claim}

\headers{Breaking the Odd Cycle}{E. Bellesso, I. M. Bomze, and F. Rinaldi}
\title{Breaking the Odd Cycle: Regularization and First-Order Optimization for the Maximum \(s\)-Plex Problem}

\author{
Eleonora Bellesso\footnotemark[1]\thanks{Dipartimento di Matematica ``Tullio Levi-Civita'',
  Università di Padova, Padova PD, Italy
  (\email{eleonora.bellesso@studenti.unipd.it},
   \email{rinaldi@math.unipd.it}).}
\and
Immanuel M. Bomze\footnotemark[2]\thanks{Fakult\"at f\"ur Mathematik \& ds:UniVie, Universität Wien,
  Wien, Austria
  (\email{immanuel.bomze@univie.ac.at}).}
\and
Francesco Rinaldi\footnotemark[1]
}

\usepackage{amsopn}

\makeatletter
\newcommand*{\addFileDependency}[1]{% argument=file name and extension
  \typeout{(#1)}% latexmk will find this if $recorder=0 (however, in that case, it will ignore #1 if it is a .aux or .pdf file etc and it exists! if it doesn't exist, it will appear in the list of dependents regardless)
  \@addtofilelist{#1}% if you want it to appear in \listfiles, not really necessary and latexmk doesn't use this
  \IfFileExists{#1}{}{\typeout{No file #1.}}% latexmk will find this message if #1 doesn't exist (yet)
}
\makeatother

\newcommand*{\myexternaldocument}[1]{%
    \externaldocument{#1}%
    \addFileDependency{#1.tex}%
    \addFileDependency{#1.aux}%
}
\ifpdf
\hypersetup{
pdftitle={Breaking the Odd Cycle: Regularization and  
First-Order Optimization for the Maximum $s$-Plex Problem},
  pdfauthor={E. Bellesso, I. M. Bomze, and F. Rinaldi}
}
\fi

\myexternaldocument{ex_supplement}

\begin{document}

\maketitle

% REQUIRED
\begin{abstract}
    Building on a new regularized continuous formulation  which avoids spurious solutions of the  maximum $s$-plex problem, we develop a tailored block-coordinate first-order method that combines away-step Frank-Wolfe updates on the standard simplex with linear optimization over the fractional $b$-matching polytope. We establish finite stabilization of the auxiliary variables, convergence to stationary points, and finite support identification under suitable conditions. Numerical experiments on standard benchmark graphs show that, also in practice, regularization avoids the support-identification failures observed with the original cubic formulation and enables the proposed method to quickly find high-quality $s$-plexes.
  
  % In Social Network Analysis, relationships between members are encoded in an undirected graph where vertices represent the members of the network, and edges indicate the existence of a relationship between them. This paper analyzes the $s$-plex clique relaxation model and develops a general regularization-based continuous optimization framework for the maximum $s$-plex problem (M$s$PP). In particular, we consider a broad class of regularization terms that can be included in the associated Motzkin-Straus cubic formulation, and we develop conditions that guarantee the equivalence between the continuous regularized problem and the original one in both a global and a local sense. Some regularization functions are also presented which meet the detected conditions. We further analyze, from a computational point of view, a tailored variant of the Frank-Wolfe algorithm that allows us to quickly find $s$-plexes within a given graph. The reported numerical results clearly show the effectiveness of the proposed approach, which is closely connected to its support identification properties, and make it a very competitive algorithm to search for maximum $s$-plexes in practical applications.
\end{abstract}

% REQUIRED
\begin{keywords}
  clique relaxations, maximum $s$-plex, Motzkin-Straus, Frank-Wolfe
\end{keywords}

% REQUIRED
\begin{AMS}
    05C35, 05C50, 65K05, 90C06, 90C30, 90C35 
\end{AMS}

% The introduction introduces the context and summarizes the manuscript. It is importantly to clearly state the contributions of this piece of work.
\section{Introduction}
The Motzkin-Straus (MS) formulation \cite{M-S} is one of the most influential examples of how a combinatorial graph problem can be approached through continuous optimization. By maximizing a quadratic function over the standard simplex, it recovers the clique number of a graph from the resulting optimal value. Besides its theoretical significance, this connection has inspired several continuous approaches to the maximum clique problem (MCP); see, for example, \cite{evolution,Hungerford}. The correspondence, however, is incomplete at the level of solutions: maximizers of the MS formulation need not have clique support. These so-called spurious solutions are particularly problematic for algorithms that recover a clique from the support of a continuous solution and have motivated the development of suitable regularizations of the original formulation \cite{feasible_and_infeasible}.

The influence of the MS formulation, however, extends beyond cliques. In many applications, particularly in community detection, requiring every pair of selected vertices to be adjacent is overly restrictive, since meaningful communities are generally dense but not complete. This observation has led to several clique-relaxation models, which describe cohesive subgraphs while allowing controlled deviations from complete connectivity; see \cite{discussion,clique_relaxations} and the references therein. In this paper, we focus on the $s$-plex model, introduced by Seidman and Foster \cite{first_s-plex} in the context of social network analysis. A vertex set is an $s$-plex if each of its vertices has at most \(s-1\) non-neighbors within the set. Thus, \(1\)-plexes are precisely cliques, whereas values of \(s>1\) permit progressively weaker connectivity requirements. This model should not be confused with the $s$-defective clique relaxation, introduced by Yu et al. \cite{yu2006predicting} in the context of protein-protein interaction networks, which allows up to \(s\) missing edges in the vertex set. These two models have often been studied together due to shared properties, such as heredity, despite the fact that the internal structures they can generate are fundamentally different and that, in general, no process exists that allows one structure to be transformed into the other. In \cite{cubic}, two cubic MS-type formulations for the maximum $s$-plex problem (M$s$PP) and the maximum $s$-defective clique problem (M$s$DCP) were proposed separately. For both cases, in addition to a vertex vector belonging to the standard simplex, the formulations employ variables indexed by the graph's missing edges. These variables encode feasible sets of missing edges, subject to the constraint that at most $s-1$ of them can be incident to any single vertex for the $s$-plex case, whereas at most $s$ missing edges can be selected for the $s$-defective clique model. In both formulations, the global optimal value of the cubic problem allows one to derive the maximum cardinality of the sought relaxed structure. However, as is the case with the original MS formulation, this characterization of the optimal value does not automatically extend to the structure of the maximizers: the cubic formulation can indeed admit spurious solutions whose vertex supports are not clique relaxations.

%A cubic MS-type formulation for the maximum $s$-plex problem (M$s$PP) was proposed in \cite{cubic}. In addition to a vertex vector belonging to the standard simplex, the formulation employs variables indexed by the missing edges of the graph. These variables encode admissible sets of missing edges, subject to the constraint that at most \(s-1\) of them be incident to any vertex. The global optimal value of the resulting cubic problem determines the maximum $s$-plex cardinality. As in the original MS formulation, however, this optimal-value characterization does not extend automatically to the structure of its maximizers: the cubic formulation may admit spurious maximizers whose vertex supports are not $s$-plexes.

For the $s$-plex formulation, eliminating these solutions is technically more involved than in both the clique and the $s$-defective clique cases. For the MCP, there are no missing-edge variables involved, so the regularization acts only on the vertex variables over the standard simplex; see \cite{Hungerford}. The $s$-defective clique formulation does introduce missing-edge variables, and its continuous relaxation contains fractional feasible points. Nevertheless, its feasible region is an integral polytope for integer $s$, meaning that all its extreme points are binary. Consequently, strict convexity with respect to the missing-edge variables is sufficient to ensure that every local maximizer of the corresponding fixed-vertex problem is binary \cite{s-defective}. This argument does not extend directly to the $s$-plex formulation, where the separate degree constraints imposed at the vertices define a fractional $b$-matching polytope. Its extreme points may themselves be fractional; more precisely, they are half-integral, with their one-half components supported on vertex-disjoint odd cycles. Strict convexity therefore guarantees only that a local maximizer is an extreme point, not that its missing-edge component is binary. The purpose of the $y$-regularization is therefore to break the odd cycles supporting fractional extreme points, in the
sense of making every such half-integral configuration locally improvable while leaving binary missing-edge selections unaffected.
Achieving this requires additional conditions on the regularization and constitutes the main technical distinction of the $s$-plex setting.

It is hence easy to understand why a framework for the $s$-plex problem cannot be obtained through a direct adaptation of the regularizations developed for the clique and $s$-defective clique problems. Such a regularizer must simultaneously control the geometry of the objective over the simplex and eliminate half-integral configurations in the missing-edge block, while leaving binary missing-edge selections unpenalized. As we will see, by deriving regularization conditions that achieve both goals through an appropriate exploitation of the problem’s distinctive polyhedral structure, we eliminate the spurious maximizers of the cubic formulation and ensure that every local maximizer encodes an $s$-plex and every global maximizer encodes a maximum $s$-plex. \\
The main contributions of the paper can be summarized as follows:
\begin{itemize}
\iffalse
\item We introduce a general class of regularized continuous formulations for the M$s$PP and derive conditions on the regularization terms that ensure the desired correspondence between their maximizers and $s$-plexes. We first address the missing-edge component. By exploiting the half-integrality of the fractional $b$-matching polytope together with the downward-closed structure of its degree constraints, we show that the regularization rules out local maximizers with fractional missing-edge components. Since the one-half components of every nonbinary extreme point are supported on vertex-disjoint odd cycles, this prevents all such fractional odd-cycle configurations from occurring at local maximizers. We then characterize the local and global maximizers of the regularized problem. In particular, every local maximizer has a binary missing-edge component and a vertex component given by the normalized characteristic vector of an $s$-plex, whereas global maximizers correspond to maximum $s$-plexes.
\fi  % seems a repeat from below (or vice versa, pls. choose)
    \item We introduce a general class of regularized continuous formulations for the M$s$PP and derive conditions on the regularization terms that exclude local maximizers with fractional missing-edge components and vertex supports that do not correspond to an $s$-plex. We first focus on the binary complementary structure. Taking into account the half-integrality of the fractional $b$-matching polytope, we show that the regularization rules out local maximizers with fractional missing-edge components. Since the one-half components of every non-binary extreme point are supported on vertex-disjoint odd cycles, this result excludes all such fractional odd-cycle configurations. We then characterize the local and global maximizers of the regularized problem. In particular, every local maximizer has a binary missing-edge component and a vertex component given by the normalized characteristic vector of an $s$-plex, whereas global maximizers correspond to maximum $s$-plexes.
    
    \item We provide and analyze several families of regularization functions satisfying the proposed conditions.
    
    \item We develop a tailored block-coordinate first-order method that combines away-step Frank-Wolfe updates on the standard simplex with linear optimization over the fractional $b$-matching polytope. We establish finite stabilization of the missing-edge variables, convergence to stationary points, and finite support identification under suitable conditions.
    
    \item We evaluate the proposed method on a standard family of benchmark graphs. The reported numerical results show that regularization avoids the support-identification failures observed with the original cubic formulation and enables the proposed method to quickly find high-quality $s$-plexes.
\end{itemize}

Frank-Wolfe methods \cite{frank1956algorithm} are particularly attractive in this setting because they avoid projections and instead rely on linear optimization over the feasible region. This projection-free feature is crucial when dealing with geometrically complex polytopes, such as the fractional $b$-matching polytope considered here. Indeed, its Euclidean projection has no simple closed-form expression and would require solving a non-trivial constrained quadratic problem at every iteration, making projection-based methods computationally impractical. By contrast, the Frank-Wolfe linear optimization oracle can directly exploit the underlying $b$-matching structure. Moreover, such projection-free algorithms are able to generate sparse iterates and identify the support of a limiting solution. This aligns well with the combinatorial interpretation of the simplex variables. The away-step mechanism is especially useful for removing incorrectly selected vertices, while the block-coordinate structure makes it possible to treat the simplex and fractional $b$-matching components using procedures tailored to their respective geometries.

The remainder of the paper is organized as follows. In section~\ref{section_2}, we introduce notation, review the cubic formulation, and recall some preliminary results on the fractional $b$-matching polytope. Section~\ref{section_3} develops the general regularization framework and characterizes the local and global maximizers of the resulting formulation. Section~\ref{section_4} presents and analyzes several families of regularization functions satisfying the proposed conditions. In section~\ref{section_5}, we present the block-coordinate first-order method and establish its finite-time stabilization, convergence, and support-identification properties. Section~\ref{section_6} reports the computational study, including the comparison between the original and regularized formulations.  Section~\ref{section_7} concludes the paper.

The implementation of the proposed method, together with the benchmark instances used in the computational study, is publicly available at the following repository: \url{https://github.com/eleonora-bellesso/s-plex_FW}.

\section{Notation and preliminaries}\label{section_2}
Let $G=(V,E)$ be a simple undirected graph with vertex set $V=\{1,\dots,n\}$ and edge set $E\subseteq \binom{V}{2}$ consisting of $m=|E|$ unordered pairs of adjacent vertices. Similarly, denote by $\bar{G}=(V,\bar{E})$ the complementary graph of $G$, where $\bar{E}=\binom{V} {2}\setminus E$, and $\bar{m}=|\bar{E}|$ is the number of missing edges in $G$. For any subset of vertices $S\subseteq V$, we denote by $G[S]=(S,E(S))$ the subgraph induced by $S$ in $G$, where $E(S)=E\cap\binom{S}{2}$. The notation adopted throughout this article largely follows that introduced in \cite{cubic}.

\begin{definition}
Given a positive integer $s$, a subset of vertices $S\subseteq V$ is an $s$-plex if every vertex in $S$ is adjacent to at least $|S|-s$ other vertices of $S$. Equivalently, every vertex in $S$ has at most $s-1$ non-neighbors in $S$.
\end{definition}

The maximum $s$-plex problem aims to find an $s$-plex of maximum cardinality in $G$. This cardinality is denoted by $w_s^p(G)$ and is called the $s$-plex number of $G$.

In \cite{cubic}, the authors showed that the quadratic MS formulation for the MCP can be extended to a continuous cubic formulation for the M$s$PP by introducing a vector $y$ of auxiliary variables indexed by the missing edges of $G$. These variables represent the ``fake edges'' that may be added to the graph in order to recover a clique structure from the $s$-plex relaxation. 

In the following, let $A_G=[a_{ij}]_{i,j\in V}$ be the $n\times n$ adjacency matrix of $G$ such that $a_{ij} = 1$ if $\{i,j\}\in E$ and $0$ otherwise, and $N(v)$ the neighborhood of any vertex $v\in V$ which indeed corresponds to the set of adjacent vertices to $v$ in $G$. We instead indicate with $B_{\bar{G}}=[b_{vc}]_{v\in V,c\in\bar{E}}$ the $n\times\bar{m}$ incidence matrix of the complementary graph $\bar{G}$, such that $b_{vc}=1$ if $v$ is an endpoint of the missing edge $c$, so $v\in c$; and $0$ otherwise. 

Let $\Delta_n=\{x\in\mathbb{R}^n\ |\ x\geq0,\ e^\top x=1\}$ be the standard simplex in $\mathbb{R}^n$. Here $e$ is always a vector with all components equal to $1$ and the dimension clear from the context. Similarly, we denote by $\mathbb{I}_{n}$ the $n\times n$ identity matrix and by $e_i\in\mathbb{R}^n$ its $i$-th column. For any vector $z$ whose components are indexed by a finite set $I$, we denote its support by $\operatorname{supp}(z):=\{i\in I:z_i\neq 0\}$.
For instance, for any nonempty $S\subseteq V$, we can define the (normalized) characteristic vector of $S$ as
$$x(S):=\frac 1{|S|}\,\sum_{i\in S}e_i\in \Delta_n$$ 
which is the barycenter of the face $\Delta(S):=\{x\in \Delta_n\ |\  \operatorname{supp}(x)\subseteq S\}$ whose support is $\operatorname{supp}(x(S)) = S$.

For each missing edge $\{i,j\}\in\bar{E}$, we formally introduce a binary variable $y_{ij}$. For any fixed positive integer $s\geq1$, we define
\begin{align}
    P_s(G) = \{y\in \{0,1\}^{\bar{m}}\ |\ B_{\bar{G}} y \leq (s - 1)e\}
\end{align}
as the set of incidence vectors of admissible missing-edge sets that may be added to the original graph $G$ in order to complete an $s$-plex to a clique, and its continuous relaxation as
\begin{align}
    P'_s(G) = \{y\in [0,1]^{\bar{m}}\ |\ B_{\bar{G}} y \leq (s - 1)e\}.
\end{align}
For any $y\in P'_s(G)$ we define $A_{\bar{G}}(y)$ as the matrix with entry in the $i$-th row and the $j$-th column given by $y_{ij}\in[0,1]$ if the edge $\{i,j\}\in\bar{E}$ and $0$ otherwise. That is, $A_{\bar{G}}(y)$ is obtained from the adjacency matrix $A_{\bar{G}}$ of the complementary graph $\bar{G}$ by replacing each non-zero entry with the corresponding $y_{ij}$ value. 
Using the notation introduced above, we can now recall the main continuous characterization of the $s$-plex number established in \cite{cubic}. The result shows that the integrality constraints on the missing-edge variables can be relaxed without changing the global optimal value, thereby extending the classical MS formulation from cliques to $s$-plexes.

\begin{theorem}[{\cite[Theorem~2]{cubic}}] \label{MS_splex_theorem} The $s$-plex number $w_s^p(G)$ of a graph $G$ can be expressed in terms of the optimal value of the following continuous cubic problem:
\begin{align}\label{MS_splex_formulation}
    1 - \frac{1}{w_s^p(G)} = \max_{(x,y)\in\Delta_n\times P'_s(G)} f_G(x,y)
\end{align}
where the objective function is given by 
\begin{align}\label{MS_splex_objective_function}
    f_G(x,y) = x^\top (A_G + A_{\bar{G}} (y))x\ .
\end{align}
\end{theorem}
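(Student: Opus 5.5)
We prove the two inequalities separately, reducing in both directions to the classical Motzkin–Straus theorem: for any graph $H$ on $V$, $\max_{x\in\Delta_n} x^\top A_H x = 1-1/\omega(H)$.

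\emph{Lower bound.} Let $S$ be a maximum $s$-plex, $|S|=w_s^p(G)$. Let $y$ be the indicator vector of the missing edges inside $S$, i.e.\ $\bar E\cap\binom S2$. Each vertex of $S$ has at most $s-1$ non-neighbours in $S$, and vertices outside $S$ meet none of these edges, so $B_{\bar G}y\le (s-1)e$ and $y\in P_s(G)\subseteq P'_s(G)$. Under this choice, $A_G+A_{\bar G}(y)$ restricted to $S$ is the adjacency matrix of a complete graph. Taking $x=x(S)$ gives
\[
f_G(x(S),y)=\frac{|S|(|S|-1)}{|S|^2}=1-\frac1{w_s^p(G)} .
\]

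\emph{Upper bound.} Fix $y\in P'_s(G)$. The map $y\mapsto f_G(x,y)=x^\top A_Gx+2\sum_{\{i,j\}\in\bar E}y_{ij}x_ix_j$ is linear in $y$, so for each fixed $x$ its maximum over $P'_s(G)$ is attained at a vertex. The difficulty is that $P'_s(G)$ is not integral. Its vertices are half-integral, with the $\tfrac12$-entries on vertex-disjoint odd cycles. So we cannot simply reduce to binary $y$, and this is the main obstacle.

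To get around it, we work with a global maximizer $(x^*,y^*)$ and the face $\Delta(T)$ with $T=\operatorname{supp}(x^*)$. We use the standard Motzkin–Straus argument on $M=A_G+A_{\bar G}(y^*)$. If $i,j\in T$ and $M_{ij}<1$ (an original missing edge with $y^*_{ij}<1$), then moving mass along $e_i-e_j$ changes the objective by $t^2(-2M_{ij})\ge$ a convex quadratic in $t$. Hence one can shift all mass from one of the two vertices without decreasing $f_G$. Iterating, we may assume every pair in the support satisfies $M_{ij}=1$, so $G[T]$ plus the edges with $y^*_{ij}=1$ is complete on $T$. At such a point $f_G(x^*,y^*)=1-\sum_i x_i^{*2}\le 1-1/|T|$. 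For $\{i,j\}\in\bar E$ inside $T$ we have $y^*_{ij}=1$, and $B_{\bar G}y^*\le(s-1)e$ then says each vertex of $T$ has at most $s-1$ non-neighbours in $T$. Thus $T$ is an $s$-plex, $|T|\le w_s^p(G)$, and the upper bound follows.

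The delicate step is the mass-shifting argument, where $y^*$ must stay fixed while $x$ moves. Because $y^*$ is held fixed, the function is a quadratic in $x$ alone, exactly as in Motzkin–Straus. The fractional entries of $y^*$ only strengthen the convexity along $e_i-e_j$, so the objective is convex there and one endpoint of the segment is at least as good. Each shift strictly shrinks the support, so the process terminates.
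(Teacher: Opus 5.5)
Your lower bound is correct. The upper bound, however, fails at the mass-shifting step. With $M=A_G+A_{\bar G}(y^*)$ (zero diagonal),
\[
f_G(x^*+t(e_i-e_j),y^*)=f_G(x^*,y^*)+2t\,[(Mx^*)_i-(Mx^*)_j]-2M_{ij}\,t^2 ,
\]
so the curvature along $e_i-e_j$ is $-2M_{ij}$. This vanishes only when $M_{ij}=0$, i.e.\ $y^*_{ij}=0$ on a missing edge, which is the classical Motzkin--Straus situation. Whenever $0<y^*_{ij}<1$ the restriction is \emph{strictly concave}. Since the linear term is zero at a maximizer (KKT on the face $\Delta(T)$), both endpoints are then strictly worse, so fractional entries destroy convexity rather than strengthen it.

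A concrete instance: let $V=\{1,2,3,4\}$, $E=\{\{1,3\},\{2,4\}\}$ and $s=2$, so $w_2^p(G)=2$. Take $x^*=\frac{1}{4}e$ and $y^*_{ij}=\frac{1}{2}$ on the four missing edges; this is feasible, since each vertex meets two of them. Then $f_G(x^*,y^*)=\frac{1}{2}=1-1/w_2^p(G)$, so $(x^*,y^*)$ is a global maximizer. Its support is not a $2$-plex, every pair in it with $M_{ij}<1$ has $M_{ij}=\frac{1}{2}$, and along each such $e_i-e_j$ the value is $\frac{1}{2}-t^2$. Your procedure cannot make a single move, so the claim that one may assume $M_{ij}=1$ on all pairs of $T$ is unjustified.

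What is missing is any use of optimality in $y$, which is exactly where the obstacle you identified has to be resolved. For fixed $x^*$, the $y$-problem is the fractional $b$-matching LP on $\bar G$ with $b=(s-1)e$ and weights $2x^*_ix^*_j$. The cited proof, as the paper describes it, uses this together with half-integrality to pass from an arbitrary global maximizer to one with binary $y^*$. Only after that is your Motzkin--Straus step valid, because then $M_{ij}\in\{0,1\}$.

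In the example this is easy, since the binary $y=e_{12}+e_{34}$ is also LP-optimal. In general it is not. Take a global maximizer of minimal support and an LP-optimal vertex $y^*$. An odd $\frac{1}{2}$-cycle that leaves $T$ can be rounded for free: open it at a vertex $z\notin T$, whose incident weights vanish. The two alternating $0/1$ roundings of the resulting path change the value by opposite amounts, hence, by LP-optimality, by zero.

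For an odd cycle lying entirely inside $T$, no rounding need be free, and a further argument is required. For instance, merge pairs with $M_{ij}=0$ and round the other cycles. The KKT system then forces $x_v=\frac{1-\lambda}{2}$ on the $q=\sum_i L_i$ cycle vertices ($L_i$ the cycle lengths) and $x_v=1-\lambda$ on the remaining $p$ vertices of $T$. Hence $\lambda=1-1/(p+q/2)$. But $T$ contains an $s$-plex with $p+\sum_i (L_i+1)/2>p+q/2$ vertices, so $\lambda<1-1/w_s^p(G)$, contradicting your lower bound. So such cycles cannot occur at a global maximizer.
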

We observe that the objective function \eqref{MS_splex_objective_function}
is a polynomial of total degree three in the variables $(x,y)$. More precisely, it is
quadratic with respect to the vertex variables~$x$ and affine with respect
to the missing-edge variables $y$.
Moreover, the projection of the feasible set onto the $x$ variables coincides with the standard simplex $\Delta_n$ as in the original MS formulation. Indeed, for the specific value of $s=1$, the feasible region for the $y$ variable coincides with the single zero vector, $P'_1(G)=\{\underline{0}\}$, and also the induced adjacency matrix is simply given by the zero matrix, $A_{\bar{G}}(y)=\underline{0}$. This finally means that no missing edges can be included in the computation, and so the above problem~(\ref{MS_splex_formulation}) trivially coincides with the original MS formulation for the MCP, as expected.

It is also important to note that 
establishing the correctness of formulation (\ref{MS_splex_formulation}) is rather non-trivial. In particular, the authors in \cite{cubic} exploited a classical result concerning the half-integrality property of the fractional $b$-matching polytope.
Since this property will play a central role in our subsequent analysis, we decided to briefly review some fundamental concepts and results concerning the canonical $b$-matching problem. We begin by formally defining this structure.
\begin{definition}
     Given an undirected graph $G=(V,E)$ and a non-negative integer vector $b\in \mathbb{Z}_n^+$, a subset of edges $M\subseteq E$ is said to be a $b$-matching in $G$ if no vertex $i\in V$ is incident to more than $b_i$ edges in $M$.
\end{definition}
Moreover, given non-negative weights $w_{ij}$ associated with each edge $\{i,j\}\in E$, the maximum weight $b$-matching problem (MW$b$MP) aims to find a $b$-matching in $G$ with the largest total possible weight. In particular, an integer linear program can be formulated for this combinatorial optimization problem, where for any edge $\{i,j\}\in E$ a binary variable $z_{ij}$ is considered, indicating whether the corresponding edge is included or not in the $b$-matching structure.
\begin{align}\label{max_WbMP}
    \begin{split}
        \max \quad & \sum_{\{i,j\}\in E} w_{ij}z_{ij}  \\
    s.t. \quad & \sum_{j\in N(i)} z_{ij} \leq b_i,\ \forall\ i\in V  \\
    \quad & z_{ij}\in\{0,1\},\ \forall\ \{i,j\}\in E\, .
    \end{split}
\end{align}
The linear relaxation of problem~(\ref{max_WbMP}) reads
\begin{align}\label{LR}
    \begin{split}
        \max \quad & \sum_{\{i,j\}\in E} w_{ij}z_{ij}  \\
    s.t. \quad & \sum_{j\in N(i)} z_{ij} \leq b_i,\ \forall\ i\in V  \\
    \quad & z_{ij}\in [0,1],\ \forall\ \{i,j\}\in E\, .
    \end{split}
\end{align}
Clearly, problem~\eqref{LR} 
%(LR) of the above program  
may have a fractional optimal solution and, in general, a higher optimal value. Consider, for example, the case where $b=e$, the unit vector, and $w_{ij}=1$ for every edge $\{i,j\}\in E$, so the standard maximum matching problem, over the undirected graph $G=C_5$, so the simple cycle of $5$ vertices. Then the optimal solution to problem~\eqref{LR} is obtained by setting $z_{ij}=\frac{1}{2}$ for every edge $\{i,j\}\in E(C_5)$. This observation finally leads to the following well-known result on the fractional $b$-matching polytope~{\cite[pp. 44-46]{johnson1965programming}}.
\begin{lemma}\label{lemma_fractional_b-matching_polytope}
    Let $P$ be the feasible region of problem~\eqref{LR}. Any extreme point $\hat{z}$ of $P$ is half-integral, i.e., $\hat{z}_{ij}\in\{0,\frac{1}{2},1\},\ \forall\ \{i,j\}\in E$. Moreover, the edge-induced subgraph $G\langle F\rangle$, where $F=\{\{i,j\}\in E\ |\ \hat{z}_{ij}=\frac{1}{2}\}$, consists of disjoint odd cycles.
\end{lemma}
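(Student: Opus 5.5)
Fix an extreme point $\hat z$ of $P$. The plan is to use the characterization of a vertex as a feasible point at which the tight constraints have a unique solution. Equivalently, $\hat z$ is a vertex if and only if there is no nonzero direction $d$ with $\hat z\pm\varepsilon d\in P$ for some small $\varepsilon>0$. Let $F_0=\{\{i,j\}\in E\ |\ 0<\hat z_{ij}<1\}$ be the set of fractional edges. Any admissible direction $d$ must be supported on $F_0$. Call a vertex $i$ tight if $\sum_{j\in N(i)}\hat z_{ij}=b_i$. Since $b_i$ is an integer, a tight vertex incident to at least one edge of $F_0$ is incident to at least two edges of $F_0$, because the fractional parts must sum to an integer. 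The argument proceeds by showing that the subgraph $G\langle F_0\rangle$ has a very restricted shape, and then reading off the values from that shape.

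\textbf{Step 1 (local structure).} Consider a connected component $H$ of $G\langle F_0\rangle$. I would construct an alternating perturbation $d$ on $H$ that changes each edge by $\pm1$, arranged so that at every tight vertex the signs cancel. If $H$ contains an even cycle, alternate $\pm1$ around it. This keeps every vertex sum unchanged and gives a nonzero feasible direction, a contradiction. If $H$ contains a path whose two end vertices are both non-tight, or whose ends have degree one in $H$ (and hence are non-tight, since a tight vertex has at least two fractional edges), alternate along the path. Only the endpoints change their sums, and they have slack, so again we get a contradiction. Two odd cycles joined by a path (or sharing a vertex) also give a contradiction, via the standard ``bicycle'' perturbation: traverse the first cycle with weights $\pm1$, the path with weights $\pm2$, and the second cycle with weights $\pm1$. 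Every vertex then sees a net change of $0$.

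\textbf{Step 2 (conclusion).} From Step 1, each component $H$ is a tree or contains exactly one cycle, which must be odd. In the tree case, a tree with at least one edge has at least two leaves. Leaves have degree one in $H$, so they are non-tight, and the path between two leaves yields a contradiction. Hence each component is unicyclic with an odd cycle $C$. Any pendant tree hanging off $C$ has a leaf, which is non-tight, and the path from that leaf around $C$ to the same attachment point gives a lollipop perturbation: weights $\pm2$ along the stem and $\pm1$ around the odd cycle. This is again a contradiction, so $H=C$ exactly. Every vertex of $C$ has degree two in $G\langle F_0\rangle$. If some vertex of $C$ were non-tight, we could perturb the odd cycle alternately starting and ending at that vertex, which changes only its sum, a contradiction. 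So every vertex of $C$ is tight, and the system $z_{uv}+z_{vw}=b_v-(\text{integer contributions})$ around $C$ holds. Around an odd cycle this system has a unique solution with entries in $\frac12\mathbb{Z}$. Each entry lies in $(0,1)$, so every entry equals $\frac12$. Therefore $\hat z\in\{0,\frac12,1\}^E$, $F=F_0$, and $G\langle F\rangle$ is a disjoint union of odd cycles.

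\textbf{Main obstacle.} The delicate part is Step 1, specifically making sure the perturbation arguments cover every non-unicyclic configuration. This includes components with two cycles sharing vertices, and the requirement that the $\pm2$ stem weights still permit a small feasible step given the $[0,1]$ bounds. Since all edges of $F_0$ are strictly inside $(0,1)$, small $\varepsilon$ handles the bounds. I would organize the case analysis via an ear decomposition: a connected graph with cyclomatic number at least $2$ contains either an even cycle or two odd cycles linked by a path, and that reduces everything to the cases already treated.
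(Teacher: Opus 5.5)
Your proof is correct. The paper gives no argument for this lemma; it imports it as a classical fact from Johnson's 1965 thesis (pp.~44--46). Your proposal is therefore a self-contained replacement rather than a variant of an in-paper proof.

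You use the ``no two-sided feasible direction'' characterization of extreme points and exhibit an explicit kernel vector for each forbidden configuration of $G\langle F_0\rangle$: the alternating even cycle, the alternating path between non-tight vertices, the $\pm1/\pm2/\pm1$ bicycle, the lollipop ending at a non-tight leaf, and the odd cycle closed at a non-tight vertex. Each is admissible for small $\varepsilon$, because all edges of $F_0$ lie in $(0,1)$ and only vertices with slack see a change in their degree sum.

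One fact you assert without proof: a connected graph of cyclomatic number at least $2$ contains an even cycle, or two odd cycles meeting in at most one vertex and joined by a path. This follows from a parity observation. Two distinct cycles sharing at least two vertices contain a theta subgraph with internally disjoint paths of lengths $a,b,c$. Its three cycles, of lengths $a+b$, $b+c$, $a+c$, have even total length, so one of them is even.

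A shorter alternative, closer to the classical basis-structure proofs, is a rank count. At a vertex, $\hat z_{F_0}$ is the unique solution of the tight degree rows restricted to the columns $F_0$. So on each component $H$ with $k$ vertices and $m\geq k-1$ edges, those rows must have rank $m$. The unsigned incidence matrix of $H$ has rank $k-1$ if $H$ is bipartite and $k$ otherwise. Hence $H$ is either a tree, which is impossible since its leaves are non-tight, or has $m=k$ with all vertices tight; then it has no leaves and is an odd cycle. That route replaces your case analysis with the incidence-rank formula. Yours buys complete elementarity and makes the perturbations explicit.
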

Here, the edge-induced subgraph $G\langle F \rangle=(V_F,F)$ has $F$ as its edge set, while its vertex set $V_F\subseteq V$ includes all vertices incident to the edges of $F$. 

% \IB{To begin with, we need a regularization procedure for the $y$-variables. We employ $\Psi$ with the following...
% \begin{lemma}\label{newlem}
% \end{lemma}
% %For any $x^*\in \Delta_n$, 
% Let $g(y)=  \ell(y)  + \Psi(y)$. Then any local maximizer of $g$ is a vertex of $P_s'(G)$.
% }

We now return to the MS formulation~\eqref{MS_splex_formulation} for the M$s$PP. As shown in the proof of Theorem~\ref{MS_splex_theorem}, fixing the $x$-component at some $x^*\in\Delta_n$ yields the following maximization problem in the missing-edge variables:
\begin{align}\label{LR_MWBMP_fix_x*}
    \begin{split}
        \max \quad & 2\sum_{\{i,j\}\in E} x^*_ix^*_j
        +2\sum_{\{i,j\}\in\bar{E}}x^*_iy_{ij}x^*_j\\
        \mathrm{s.t.}\quad
        & B_{\bar{G}}y\leq(s-1)e,\qquad
        y\in[0,1]^{\bar{m}}.
    \end{split}
\end{align}
Since the first term of the objective does not depend on $y$, problem~\eqref{LR_MWBMP_fix_x*} coincides, up to an additive constant, with the linear relaxation of the MW$b$MP~\eqref{max_WbMP} on the complement graph $\bar{G}$, with $b=(s-1)e$ and weights $w^*_{ij}=2x^*_ix^*_j$ for every $\{i,j\}\in\bar{E}$.

The proof of Theorem~\ref{MS_splex_theorem} exploits this connection to show that, starting from an arbitrary global maximizer of formulation~\eqref{MS_splex_formulation}, one can construct another global maximizer $(x^*,y^*)$ with the same objective value such that $y^*$ is binary and $\operatorname{supp}(x^*)$ is a maximum $s$-plex in $G$. Moreover, $y^*$ selects precisely the missing edges needed to complete the subgraph induced by $\operatorname{supp}(x^*)$ into a clique. This is an existence result and does not imply that every global maximizer has the same combinatorial structure. Indeed, as already pointed out in \cite{cubic}, formulation~\eqref{MS_splex_formulation} may admit spurious global maximizers, namely optimal pairs $(x^*,y^*)$ for which $\operatorname{supp}(x^*)$ is not an $s$-plex in $G$.
% At this point, let us come back to the considered MS formula (\ref{MS_splex_formulation}) for the M$s$PP. As part of the proof of Theorem \ref{MS_splex_theorem}, it is showed that if the $x$-component of the program (\ref{MS_splex_formulation}) is fixed, set for example $x\equiv x^*$, then the following maximization problem is obtained:
% \begin{align}\label{LR_MWBMP_fix_x*}
%     \begin{split}
%         \max \quad & 2\sum_{\{i,j\}\in E} x^*_ix^*_j\ +\ 2\sum_{\{i,j\}\in\bar{E}} x^*_iy_{ij}x^*_j  \\
%         s.t. \quad & B_{\bar{G}} y \leq (s - 1)e,\  y\in [0,1]^{\bar{m}}
%     \end{split}
% \end{align}
% which coincides, up to a constant, with the LR of the MW$b$MP (\ref{max_WbMP}) in the complementary graph $\bar{G}$ with $b=(s-1)e$ and weights $w^*_{ij}=2x^*_ix^*_j,\ \forall\ \{i,j\}\in\bar{E}$. \\
% To conclude the proof, the authors finally showed that it is indeed always possible to find a feasible solution $x^*\in \Delta_n$ such that the above program (\ref{LR_MWBMP_fix_x*}) always admits an integer optimal solution $y^*\in\{0,1\}^{\bar{m}}$.\\
% As already pointed out in \cite{cubic}, a well-known drawback of formulation (\ref{MS_splex_formulation}) is that spurious solutions are admitted, i.e., optimal vectors connected with no $s$-plexes within the graph. 
The following example provides a simple, yet representative illustration of this undesirable behavior. Consider the graph $G=(V,E)$ shown in Figure \ref{spurious_solution_MsPP}. Clearly, the $2$-plex number of $G$ is $w_2^p(G)=3$ since both sets $C=\{1,2,3\}$ and $C'=\{2,3,4\}$ are $2$-plexes of maximum cardinality in $G$. However, there exists a globally optimal solution $(x^*,y^*)\in\Delta_n\times P'_2(G)$ of (\ref{MS_splex_formulation}) whose support in the $x$-component $S^*=\operatorname{supp}(x^*)=\{1,2,3,4\}$ is not a $2$-plex in $G$. Indeed, the solution represented in Figure~\ref{spurious_solution_MsPP} satisfies $f_G(x^*,y^*)=\frac{2}{3}
=1-\frac{1}{w_2^p(G)},$ and is therefore globally optimal by Theorem~\ref{MS_splex_theorem}.
\begin{figure}[ht!]
    \centering
    \begin{tikzpicture}[node distance={40mm}, thick, main/.style = {draw, circle}] 
        \node[main, label=left:{$x^*_1=\frac{1}{6}$}] (1) {$1$};
        \node[main, label=right:{$x^*_2=\frac{1}{3}$}] (2) [right of = 1] {$2$}; 
        \node[main, label=left:{$x^*_3=\frac{1}{3}$}] (3) [below right of = 1] {$3$}; 
        \node[main, label=right:{$x^*_4=\frac{1}{6}$}] (4) [right of = 3] {$4$};
        \draw [dash pattern={on 7pt off 7pt}] (1) -- (2) node [midway, above] {$y^*_{12}=1$}; 
        \draw (1) -- (3); 
        \draw [dash pattern={on 7pt off 7pt}] (1) -- (4) node [midway, right=10pt] {$y^*_{14}=0$};;
        \draw (2) -- (3); 
        \draw (2) -- (4); 
        \draw [dash pattern={on 7pt off 7pt}] (3) -- (4) node [midway, below] {$y^*_{34}=1$};;   
    \end{tikzpicture} 
    \caption{Example of $(x^*,y^*)$ spurious solution for $G$. The dashed lines indicate the missing edges.}
    \label{spurious_solution_MsPP}
\end{figure}
This phenomenon is consistent with the fact that the cubic formulation~\eqref{MS_splex_formulation} for the M$s$PP was derived as a direct extension of the original MS formulation. In particular, it inherits an analogous limitation at the level of solutions: although the formulation provides the exact optimal value, not every global maximizer has a vertex support that gives the desired combinatorial structure.

Accordingly, the primary aim of our study is to develop a regularized variant of formulation~\eqref{MS_splex_formulation} that guarantees the vertex support of each local maximizer forms an $s$-plex in $G$.

\section{General regularized formulation}\label{section_3}
In this section, we address the issue of spurious solutions for the cubic $s$-plex formulation (\ref{MS_splex_formulation}) by introducing suitable regularization terms in the objective function, which ensure equivalence between the derived regularized version and the initial M$s$PP in both a local and global sense. In particular, no post-processing algorithms are needed to recover an $s$-plex structure from the optimal solution returned by the program. 

Let us start by recalling a well-known property of convex maximization problems, which helps to locate optimal solutions as extreme points of the feasible region; see \cite{book} for a more detailed discussion.

\begin{lemma}\label{max_convex_program}
    Let $\Omega\subseteq\mathbb{R}^n$ be a nonempty  polytope, and let $f\in C^2(U)$ be a strictly convex function defined on an open set $U\supseteq\Omega$. Then every local maximizer, and hence every global maximizer, of $\max_{x\in\Omega}f(x)$ is an extreme point of $\Omega$. Moreover, every local maximizer is strict.
\end{lemma}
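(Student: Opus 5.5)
Let $\bar x\in\Omega$ be a local maximizer of $f$ over $\Omega$, and suppose $\bar x$ is not an extreme point. The goal is to derive a contradiction from the strict convexity of $f$ along a segment through $\bar x$.

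\emph{Step 1 (a segment through $\bar x$).} Since $\Omega$ is a polytope, in particular convex, and $\bar x$ is not an extreme point, there are $u,v\in\Omega$ with $u\neq v$ and $\bar x=\tfrac12(u+v)$. Set $d:=\tfrac12(v-u)\neq 0$. Then $\bar x\pm t d\in\Omega$ for all $t\in[0,1]$, by convexity.

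\emph{Step 2 (strict midpoint inequality).} Define $\varphi(t):=f(\bar x+td)$ on $[-1,1]$. Because $\bar x\pm td$ stay inside $\Omega\subseteq U$, the restriction of the strictly convex $f$ to this segment is strictly convex in $t$. Hence, for every $t\in(0,1]$,
\begin{equation*}
f(\bar x)=\varphi(0)<\tfrac12\bigl(\varphi(t)+\varphi(-t)\bigr)\le\max\{f(\bar x+td),\,f(\bar x-td)\}.
\end{equation*}

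\emph{Step 3 (contradiction with local maximality).} Since $\|\pm td\|\to 0$ as $t\downarrow 0$, every neighbourhood of $\bar x$ in $\Omega$ contains, for $t$ small enough, a feasible point $\bar x\pm td$ with strictly larger objective value. This contradicts local maximality, so $\bar x$ must be an extreme point of $\Omega$.

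\emph{Global maximizers.} A global maximizer exists, since $f$ is continuous on the compact set $\Omega$. Every global maximizer is in particular a local maximizer, so it is an extreme point as well.

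\emph{Strictness.} Let $\bar x$ be a local maximizer; by the above it is a vertex of $\Omega$. Take a neighbourhood $B$ of $\bar x$ on which $f(x)\le f(\bar x)$ for all $x\in\Omega\cap B$, and suppose some $x'\in\Omega\cap B$ with $x'\neq\bar x$ satisfies $f(x')=f(\bar x)$. Shrinking $B$ to a ball keeps $\Omega\cap B$ convex, so the midpoint $m=\tfrac12(\bar x+x')$ lies in $\Omega\cap B$. Strict convexity gives
\begin{equation*}
f(m)<\tfrac12 f(\bar x)+\tfrac12 f(x')=f(\bar x).
\end{equation*}
This alone is no contradiction, because $m$ is allowed to have a smaller value. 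The better route is to observe that $x'$ is itself a local maximizer: for $x'$ close enough to $\bar x$, a small ball around $x'$ is contained in $B$, and on it $f\le f(\bar x)=f(x')$. By the first part, $x'$ is then an extreme point of $\Omega$. A polytope has finitely many vertices, so choosing $B$ to contain no vertex other than $\bar x$ rules out $x'\neq\bar x$, which proves strictness.

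The main subtlety is this strictness argument: the naive midpoint argument does not work, and one must use that a polytope has finitely many vertices. The remaining steps are routine.
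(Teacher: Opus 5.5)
Your proof is correct. The paper does not prove this lemma: it recalls it as a well-known fact about convex maximization and refers to \cite{book}, so there is no in-paper argument to compare with. Your argument is the standard one: strict convexity along a segment through a non-extreme point gives extremality, and finiteness of the vertex set gives strictness. You were right to discard the naive midpoint attempt, which cannot succeed because the midpoint may have a smaller value.

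Two small presentational points.
\begin{itemize}
\item The representation $\bar x=\tfrac12(u+v)$ needs one line of justification. If $\bar x=\lambda u+(1-\lambda)v$ with $\lambda\in(0,1)$, replace $u,v$ by $\bar x\pm r(u-v)$ with $r=\min\{\lambda,1-\lambda\}$.
\item In the strictness part, fix from the outset an open ball $B$ around $\bar x$ that both witnesses local maximality and contains no vertex of $\Omega$ other than $\bar x$. Then every $x'\in\Omega\cap B$ automatically has a small ball around it inside $B$, and no shrinking after choosing $x'$ is needed.
\end{itemize}

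Finally, your argument uses only strict convexity of $f$ on the convex set $\Omega$, plus continuity for the existence remark. The $C^2$ hypothesis plays no role.
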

We continue the discussion by recalling some results from \cite{Hungerford} on the regularized version of the quadratic MS formulation for the MCP, which will indeed play a fundamental role in the development of our subsequent analysis. Given the graph $G=(V,E)$, consider the following optimization problem
\begin{align}\label{regularized_MS_MCP}
    \begin{split}
        \max \quad & \left [ x^\top A_Gx\ +\ \Phi(x) \right ]\\
        s.t. \quad & x\in\Delta_n,
    \end{split}
\end{align}
where $\Phi:\ X\longrightarrow\mathbb{R}$ is a twice continuously differentiable function defined in some open set $X\supseteq\Delta_n$ that satisfies the following conditions for every $x\in\Delta_n$:
\begin{enumerate}[label=(C\arabic*)]
    \item \label{cond_C1} $\nabla^2\Phi(x)>0$, so $\Phi$ is strictly convex over the simplex;
    \item \label{cond_C2} $\|\nabla^2\Phi(x)\|_2<2$, or equivalently $\nabla^2\Phi(x)-2\mathbb{I}_n<0$;
    \item \label{cond_C3} $\Phi$ is constant on the set $\mathcal{P}(x):=\{\tilde{x}\in\Delta_n\ |\ \exists\sigma\in\mathcal{S}_n\ s.t.\ \tilde{x}_i=x_{\sigma(i)}\ \forall i\in\{1,\dots,n\}\}$, where $\mathcal{S}_n$ is the set of permutations of $\{1,\dots,n\}$.
\end{enumerate}
 For simplicity, we now denote by
\begin{itemize}
    \item $k_G(x)=x^\top A_Gx + \Phi(x)$ the objective function of (\ref{regularized_MS_MCP});
    \item $T(x)=\{d\in\mathbb{R}^n\ |\ e^\top d=0\text{, and }d_i\geq0\text{ for all } i\notin\operatorname{supp} (x)  %\text{ whenever }x_i=0
    \}$ the set of feasible directions for (\ref{regularized_MS_MCP}) at $x\in\Delta_n$.
\end{itemize} 
Under these assumptions, the following results hold. Recall that for any $\emptyset\neq C\subseteq V$, the point $x(C)\in \Delta_n$ is the (normalized) characteristic vector of $C$ which satisfies $x(C)_i =\frac{1}{|C|}$ if $i\in C$ and $x(C)_i = 0$ otherwise.
\begin{lemma}[{\cite[Proposition~3]{Hungerford}}] \label{increasing_value_result}
    If $C^1$ and $C^2$ are cliques, then
    \begin{align*}
        |C^1|<|C^2| \Longleftrightarrow k_G(x(C^1))<k_G(x(C^2)).
    \end{align*}
\end{lemma}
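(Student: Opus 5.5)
We need to prove: for cliques $C^1,C^2$, $|C^1|<|C^2|$ iff $k_G(x(C^1))<k_G(x(C^2))$.

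The plan is to compute $k_G$ at the barycenter of a clique explicitly and show that it depends only on the clique size, through a function that is strictly increasing in that size. For a clique $C$ with $|C|=k$, every pair in $C$ is adjacent, so
\begin{align*}
x(C)^\top A_G x(C)=\frac{k(k-1)}{k^2}=1-\frac1k .
\end{align*}
By \ref{cond_C3}, $\Phi(x(C))$ depends only on the multiset of entries of $x(C)$, namely $k$ copies of $1/k$ and $n-k$ zeros. We may therefore write $\Phi(x(C))=\phi(k)$ with $\phi(k):=\Phi(x(\{1,\dots,k\}))$. Hence $k_G(x(C))=h(k):=1-\frac1k+\phi(k)$. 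Both directions of the equivalence then follow once we show that $h$ is strictly increasing on $\{1,\dots,n\}$. For ``$\Leftarrow$'' we argue by contraposition: if $|C^1|\ge|C^2|$, then monotonicity gives $h(|C^1|)\ge h(|C^2|)$.

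To show that $h(k+1)>h(k)$, we compare the two barycenters along a segment while staying inside the simplex. Set $u=x(\{1,\dots,k\})$ and $v=x(\{1,\dots,k+1\})$, and consider the quadratic-plus-$\Phi$ function restricted to the face $\Delta(\{1,\dots,k+1\})$. On this face the complete graph $K_{k+1}$ gives $q(x)=x^\top(ee^\top-\mathbb I)x=1-\|x\|^2$. Define $F(x)=1-\|x\|^2+\Phi(x)$, so that $h(k)=F(u)$ and $h(k+1)=F(v)$ (the zero entries are unaffected).

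By \ref{cond_C2}, $\nabla^2F=\nabla^2\Phi-2\mathbb I\prec0$ on the simplex, so $F$ is strictly concave on the face. By \ref{cond_C3}, $F$ is symmetric under permutations of the coordinates $1,\dots,k+1$. A strictly concave symmetric function on the face $\Delta(\{1,\dots,k+1\})$ attains its unique maximum at the barycenter: averaging over permutations and applying Jensen gives $F(\bar x)\ge F(x)$, with strict inequality whenever $x\neq\bar x$. Therefore $F(v)>F(u)$ because $u\neq v$, which gives $h(k+1)>h(k)$.

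Only the concavity condition \ref{cond_C2} and the symmetry \ref{cond_C3} are used; \ref{cond_C1} is not needed. The main point requiring care is the Jensen/symmetrization step, in particular strictness and the fact that the averaged point stays in the face. Both follow from the convexity of the face and its invariance under permutations of the coordinates $1,\dots,k+1$. Since strict concavity holds on a neighborhood of the compact simplex, it also holds along the segments used. Iterating the inequality $h(k+1)>h(k)$ gives strict monotonicity of $h$, and the lemma follows.
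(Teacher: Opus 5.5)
Your proof is correct. The paper gives no argument of its own for this lemma and simply imports it from \cite{Hungerford}, so there is no in-paper proof to compare against. Your derivation is a complete one that uses only \ref{cond_C2} and \ref{cond_C3}. Permutation symmetry reduces $k_G(x(C))$ to a function of $|C|$ and lets you work on the face of the larger clique. There $1-\|x\|^2+\Phi(x)$ is strictly concave and symmetric, so the barycenter is its unique maximizer.

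Two small remarks. First, you can apply the symmetrization directly on $\Delta(\{1,\dots,|C^2|\})$: the barycenter $x(\{1,\dots,|C^2|\})$ is the average over all permutations of $x(\{1,\dots,|C^1|\})$. This compares the two sizes in one step, so iterating $k\to k+1$ is unnecessary. Second, the appeal to a neighborhood of the simplex is superfluous. Every segment used in the Jensen step lies in the face, where \ref{cond_C2} already gives pointwise negative definiteness of $\nabla^2\Phi-2\mathbb{I}_n$.
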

\begin{lemma}[{\cite[Lemma~3]{Hungerford}}]\label{negative_scalar_product_with_the_matrix}
    If $C$ is a maximal clique in $G$, then 
    \begin{align*}
        x(C)^\top A_Gd\leq0,\ \forall d\in T(x(C)).
    \end{align*}
\end{lemma}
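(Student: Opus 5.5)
The plan is to write out the bilinear form explicitly and use only maximality of $C$. For $x=x(C)$ and any $d\in\mathbb{R}^n$,
\begin{align*}
x(C)^\top A_G d=\sum_{j\in V} d_j \,(A_G x(C))_j=\frac{1}{|C|}\sum_{j\in V} d_j\,|N(j)\cap C|.
\end{align*}
So the whole argument reduces to evaluating the counts $|N(j)\cap C|$ for $j$ inside and outside $C$.

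For $j\in C$, the clique property gives $|N(j)\cap C|=|C|-1$, since $j$ is adjacent to every other member of $C$ and $A_G$ has zero diagonal. For $j\notin C$, maximality of $C$ means $C\cup\{j\}$ is not a clique. Hence $j$ has at least one non-neighbor in $C$, and so $|N(j)\cap C|\le |C|-1$.

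Now take $d\in T(x(C))$. Since $\operatorname{supp}(x(C))=C$, we have $d_j\ge 0$ for every $j\notin C$ and $e^\top d=0$. This gives $\sum_{j\in C}d_j=-\sum_{j\notin C}d_j$, and therefore
\begin{align*}
|C|\, x(C)^\top A_G d &= (|C|-1)\sum_{j\in C} d_j+\sum_{j\notin C} d_j\,|N(j)\cap C|\\
&= \sum_{j\notin C} d_j\bigl(|N(j)\cap C|-(|C|-1)\bigr)\le 0 .
\end{align*}
The inequality holds termwise because $d_j\ge 0$ and the bracket is nonpositive for every $j\notin C$. This proves the claim.

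There is no real obstacle. The only care needed is the sign bookkeeping: the free coordinates $d_j$ with $j\in C$ must be eliminated through $e^\top d=0$, which works because they all share the same coefficient $|C|-1$. After that, the sign constraint on the coordinates outside $C$ finishes the argument.
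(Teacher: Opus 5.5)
Your proof is correct and takes the standard route. You compute $A_Gx(C)$ componentwise: it equals $\frac{|C|-1}{|C|}$ on $C$ by the clique property and is at most that value off $C$ by maximality; combining this with $e^\top d=0$ and $d_j\ge 0$ for $j\notin C$ is the argument behind \cite[Lemma~3]{Hungerford}, which the paper cites without reproving (and the same identity on $C$ reappears in the proof of Theorem~\ref{general_local_equiv_prop}).
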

\begin{lemma}[{\cite[Lemma~4]{Hungerford}}]\label{negative_scalar_product_with_the_penalty_term}
    Let $S\subseteq V$ be non-empty. Then
    \begin{align*}
        \nabla\Phi(x(S))^\top d\leq0,\ \forall d\in T(x(S))\, .
    \end{align*}
    Moreover, the inequality is strict whenever $\operatorname{supp}(d)\nsubseteq S$.
\end{lemma}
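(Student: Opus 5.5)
The statement to prove is Lemma~\ref{negative_scalar_product_with_the_penalty_term}. The plan is to use the symmetry condition \ref{cond_C3} to show that $\nabla\Phi(x(S))$ is constant on $S$. Then convexity \ref{cond_C1} should show that the gradient components outside $S$ are no larger than those on $S$, with strict inequality.

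\textbf{Step 1 (gradient constant on $S$).} Take $i,j\in S$ and let $\tau$ be the transposition swapping them. Then $x(S)$ is fixed by $\tau$, and by \ref{cond_C3} $\Phi(P_\tau x)=\Phi(x)$ for $x$ in a neighborhood of $x(S)$ within $\Delta_n$. I differentiate along the feasible direction $e_i-e_j$, which exists because both coordinates are positive. Since $P_\tau(e_i-e_j)=-(e_i-e_j)$, this gives $\nabla\Phi(x(S))^\top(e_i-e_j)=-\nabla\Phi(x(S))^\top(e_i-e_j)$. Hence $\partial_i\Phi=\partial_j\Phi=:\gamma$ on $S$.

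\textbf{Step 2 (components outside $S$).} When $S\neq V$, the same argument shows that $\partial_k\Phi(x(S))=:\delta$ is the same for all $k\notin S$. To see this, swap two indices outside $S$ and use the one-sided feasible direction $e_k+e_l-2e_i$ together with symmetry; alternatively, compare by symmetry directly. To compare $\delta$ with $\gamma$, fix $i\in S$, $k\notin S$ and the permutation $\tau=(i\,k)$. Set $\bar x=x(S)$ and, for small $t>0$, $z_t=\bar x+t(e_k-e_i)$. Then $P_\tau z_t=\bar x+(\tfrac1{|S|}-t)(e_k-e_i)+\dots$, and more precisely $z_t$ and $P_\tau z_t$ are symmetric images on the segment from $\bar x$ to $P_\tau\bar x$. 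So the function $\varphi(t)=\Phi(\bar x+t(e_k-e_i))$ for $t\in[0,1/|S|]$ satisfies $\varphi(t)=\varphi(1/|S|-t)$ by \ref{cond_C3}. By \ref{cond_C1}, $\varphi$ is strictly convex, so it is symmetric about the midpoint $1/(2|S|)$ and strictly decreasing on $[0,1/(2|S|)]$. Therefore $\varphi'(0)=\delta-\gamma<0$.

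\textbf{Step 3 (conclusion).} For $d\in T(x(S))$ we have $d_k\ge0$ for $k\notin S$ and $\sum_i d_i=0$. This gives
\[
\nabla\Phi(x(S))^\top d=\gamma\sum_{i\in S}d_i+\sum_{k\notin S}\partial_k\Phi\, d_k=\sum_{k\notin S}(\partial_k\Phi-\gamma)d_k\le0 ,
\]
with strict inequality as soon as some $d_k>0$ for $k\notin S$, that is, when $\operatorname{supp}(d)\nsubseteq S$. In fact Step 2 only needs $\partial_k\Phi<\gamma$ for each $k$ individually, which the $\varphi$ argument gives for each pair $(i,k)$ directly, so the claim that the components outside $S$ are all equal is unnecessary.

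The main obstacle is rigor in Step 1. Condition \ref{cond_C3} only gives invariance on $\Delta_n$, not on the open set $X$, so the derivative identity must be taken along directions tangent to the simplex. I would handle this by noting that $\Phi\circ P_\tau-\Phi$ vanishes on the relative neighborhood of $x(S)$ in $\Delta(S)$, whose tangent space contains $e_i-e_j$. In Step 2 the segment lies in $\Delta_n$, so there is no issue.
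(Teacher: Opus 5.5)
Your proof is correct. The paper does not prove this lemma: it imports it from \cite{Hungerford} and uses it only as a black box in the first-order part of the proof of Theorem~\ref{general_local_equiv_prop}. There is therefore no in-paper argument to compare against, and your two-step structure is the natural self-contained route. In Step~1, symmetry under a transposition inside $S$, taken along the two-sided tangent direction $e_i-e_j$, makes the gradient constant on $S$. In Step~2, symmetry plus strict convexity along the segment from $x(S)$ to its image under $(i\,k)$ gives $\partial_k\Phi(x(S))<\gamma$ for $k\notin S$. You also correctly apply (C3) only at points of $\Delta_n$.

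Two small points of precision.

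First, in Step~2, ``strictly decreasing on $[0,1/(2|S|)]$'' by itself only yields $\varphi'(0)\le 0$. The strict sign comes from two facts: $\varphi''>0$ on $[0,1/|S|]$, so $\varphi'$ is strictly increasing; and $\varphi'(1/(2|S|))=0$, which you get by differentiating $\varphi(t)=\varphi(1/|S|-t)$ at the midpoint. A one-line alternative uses the strict first-order convexity inequality $\Phi(P_\tau\bar x)>\Phi(\bar x)+\nabla\Phi(\bar x)^\top(P_\tau\bar x-\bar x)$. It is valid because the segment lies in $\Delta_n$, where (C1) holds. Combining it with $\Phi(P_\tau\bar x)=\Phi(\bar x)$ and $P_\tau\bar x-\bar x=\frac{1}{|S|}(e_k-e_i)$ gives $\partial_k\Phi(\bar x)<\partial_i\Phi(\bar x)$ directly, with no need to locate the minimizer of $\varphi$.

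Second, your parenthetical sketch for the equality of all components outside $S$ does not work as written. The direction $e_k+e_l-2e_i$ is fixed by $\tau=(k\,l)$, so symmetry extracts nothing from it. You would instead compare the one-sided derivatives along $e_k-e_i$ and $e_l-e_i$, which $\tau$ interchanges. As you note yourself, that claim is unnecessary, since Step~3 uses only the pairwise strict inequalities.
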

Finally, through the use of second-order sufficient optimality conditions, the authors in \cite{Hungerford} were able to show that there exists a one-to-one correspondence between local (global) maximizers of (\ref{regularized_MS_MCP}) and maximal (maximum) cliques in $G$. 

We can now turn to our results. Recall that we have denoted by $A_G$ the adjacency matrix of the graph $G$ and instead by $A_{\bar{G}}(y)$ the induced adjacency matrix of the complementary graph $\bar{G}$ for every $y\in P'_s(G)$. In particular, note that for any binary vector $y^*\in P_s(G)$, we can define the auxiliary graph $G^*$ with adjacency matrix given by $A_{G^*} = A_G + A_{\bar{G}}(y^*)$ as the graph obtained from $G$ by simply adding the missing edges selected by the binary $y^*$ variable. Moreover, let us also indicate with $N(i) = \{j\in V\ |\ \{i,j\}\in E\}$ and $N^*(i) = \{j\in V\ |\ \{i,j\}\in E \cup \operatorname{supp}(y^*)\}$ the neighborhoods of a vertex $i\in V$ for the graphs $G$ and $G^*$, respectively. Finally recall that $\bar{E}(C)$ represents the set of missing edges with both end points in $C$.

Consider the following optimization problem:
\begin{align} \label{general_regularized_formulation_MsPP}
    \begin{split}
        \max\quad& h_G(x,y) \\
        s.t.\quad& (x,y)\in\Delta_n\times P'_s(G),
    \end{split}
\end{align}
where the objective function is given by
\begin{align*}
    h_G(x,y) = f_G(x,y) + \Phi(x) +\Psi(y).
\end{align*}
In particular, the first term corresponds to the objective function (\ref{MS_splex_objective_function}) of the MS formulation for the $s$-plex relaxation, while the remaining terms are general regularizers for the variables $x$ and $y$, separately. The choice of using separate regularization terms is motivated by the different roles and feasible-set geometries of the two variable blocks: $x$ represents the vertices, whereas $y$ represents the missing edges of the graph. Specifically, we consider $\Phi(x)$ as a general regularized function for the MS of the MCP, as introduced in \cite{Hungerford} and reported in the discussion above,  while, for the $y$-component, we introduce the separable regularization function
\begin{align}\label{y_penalty}
    \Psi(y)\ :=\sum_{\{i,j\}\in\bar{E}}\psi(y_{ij}),
\end{align}
where $\psi:\ Y\longrightarrow\mathbb{R}$ is a twice continuously differentiable function, defined on some open set $Y\supseteq[0,1]$, that satisfies the following requirements:
\begin{enumerate}[label=(R\arabic*)]
    \item \label{cond_R1} $\psi(0)=\psi(1)=0$
    \item \label{cond_R2} $\psi'(\frac{1}{2})<-\frac{1}{2}$
    \item \label{cond_R3} $\psi''(t)>0$ for all $t\in[0,1]$, so $\psi$ is strictly convex over the unit interval.
\end{enumerate}
As will be formally shown below, these conditions ensure that the regularization term vanishes at binary vectors while ruling out half-integral extreme points of $P'_s(G)$ as local maximizers. Consequently, the $y$-component of every local maximizer of the regularized formulation is binary.
% As shown below, these conditions ensure that fractional extreme points of $P'_s(G)$ cannot be local maximizers, while $\Psi$ vanishes at every binary vector.

% In this scenario, when the parameter $s=1$, and so the $y$ variables are constrained to be zero, then also $\Psi(y)$ vanishes and the objective function $h_G$ coincides with that one of a regularized version of MS for the MCP. This property in particular shows the robustness of the chosen formulation (\ref{general_regularized_formulation_MsPP}), which can therefore be seen as a direct extension of the regularized version proposed in \cite{Hungerford} and adapted to the $s$-plex case. 

In the special case $s=1$, the $y$-variables are constrained to be zero and, by condition~(R1), $\Psi(y)$ vanishes. Consequently, the objective function $h_G$ reduces exactly to that of the regularized MS formulation for the MCP considered in \cite{Hungerford}. This consistency with the clique case shows that formulation~\eqref{general_regularized_formulation_MsPP} can be viewed as a natural extension of the regularized MS framework from cliques to $s$-plexes.

In the following theorem, we provide a complete characterization of the local
maximizers of formulation~\eqref{general_regularized_formulation_MsPP}. In particular,
we show that every local maximizer is strict, has a binary $y$
component, and has an $x$ component equal to the  characteristic
vector of an $s$-plex.
% In the following, we prove that there exists a one-to-one correspondence between strict local maxima of (\ref{general_regularized_formulation_MsPP}) and characteristic vectors of $s$-plexes within a given graph~$G$. 
In order to show this, we apply second-order sufficient optimality conditions following an approach similar to those in \cite{s-defective, Hungerford}. For this reason, given any feasible point $(x,y)\in\Delta_n\times P'_s(G)$ we define
\begin{align*}
    T(x,y) = \{(\bar{x},\bar{y}) - (x,y)\ |\ (\bar{x},\bar{y})\in\Delta_n\times P'_s(G)\}\subseteq \mathbb{R}^{n+\bar{m}}
\end{align*}
as the set of feasible directions at $(x,y)$, while for any vector $r\in \mathbb{R}^{n+\bar{m}}$ we have
\begin{align*}
    T^0((x,y),r) = \{d\in T(x,y)\ |\ d^\top r=0 \}
\end{align*}
as the intersection between the set $T(x,y)$ and the hyperplane orthogonal to $r$. So, in this setting, a feasible point $(x^*,y^*)\in\Delta_n\times P'_s(G)$ is a strict local maximizer of (\ref{general_regularized_formulation_MsPP}) if the following conditions hold:
\begin{align}\label{general_second_order_sufficient_condition_1}
    \nabla h_G(x^*,y^*)^\top  d\leq 0,\ \forall d\in T(x^*,y^*),
\end{align}
\begin{align}\label{general_second_order_sufficient_condition_2}
    d^\top  \nabla^2h_G(x^*,y^*)d<0,\ \forall d \in T^0((x^*,y^*),\nabla h_G(x^*,y^*))\setminus\{0\}\, .
\end{align}
%The following  provides a complete characterization of local maximizers of the regularized formulation (\ref{general_regularized_formulation_MsPP}).

 \begin{theorem} \label{general_local_equiv_prop}
    The following statements are equivalent:
    \begin{itemize}
        \item[i.] $(x^*,y^*)\in \Delta_n\times P'_s(G)$ is a local maximizer of (\ref{general_regularized_formulation_MsPP});
        \item[ii.] $(x^*,y^*)\in \Delta_n\times P'_s(G)$ is a strict local maximizer of (\ref{general_regularized_formulation_MsPP});
        \item[iii.] $y^*$ is binary and $x^*=x(C)$ for $C$ an $s$-plex in $G$ which corresponds to a maximal clique in the auxiliary graph $G^*$ induced by $y^*$. Moreover $y^*_{ij} = 1$ for every $\{i,j\} \in \bar{E}(C)$.
    \end{itemize}
    In either of these equivalent cases we have that
   $h_G(x^*,y^*) = 1 - \frac{1}{|C|} + \Phi(x(C)).
   $
\end{theorem}
\begin{proof}

    \fbox{$(i)\Longrightarrow(iii)$}  Let $(x^*,y^*)\in \Delta_n\times P'_s(G)$ be a local maximizer of (\ref{general_regularized_formulation_MsPP}). First, we want to show that $y^*\in P_s(G)$. In order to do this, let us fix the $x$-component of the problem, so $x\equiv x^*$, thus deriving the following optimization problem in the single $y$ variable:
    \begin{align}\label{general_regularized_formulation_x_fixed}
        \begin{split}
            &\max\quad g_G(y)  \\
            &s.t.\quad\ \ y\in P'_s(G),
        \end{split}
    \end{align}
    where the objective function is given by
    \begin{align*}
        g_G(y) = h_G(x^*,y) = {(x^*)}^\top [A_G+A_{\bar{G}}(y)]x^* + \Phi(x^*)\ + \sum_{\{i,j\}\in\bar{E}}\psi(y_{ij}).
    \end{align*}
    Since $(x^*,y^*)$ is a local maximizer of (\ref{general_regularized_formulation_MsPP}) by assumption, $y^*$ must also be a local maximizer of (\ref{general_regularized_formulation_x_fixed}). Moreover, the objective $g_G(y)$ has a positive-definite Hessian matrix over the polytope $P'_s(G)$, since for any $\{i,j\}\in\bar{E}$ and $y\in P'_s(G)$ we have
    $
        \frac{\partial^2}{\partial y_{ij}^2}g_G(y) = \psi''(y_{ij})>0,
   $
    while for any pair of distinct missing edges $\{i,j\},\{k,l\}\in\bar{E}$ the corresponding entry is
    $
        \frac{\partial^2}{\partial y_{ij}\partial y_{kl}}g_G(y) = 0.
    $
    Hence, by Lemma \ref{max_convex_program} we can conclude that all local maximizers of (\ref{general_regularized_formulation_x_fixed}) are extreme points of the feasible region. This in particular implies that the local maximizer $y^*$  is a vertex of the polytope $P'_s(G)$. 
    
    On the other hand, we have already pointed out how the feasible region $P'_s(G)$ coincides with a fractional $b$-matching polytope in the complementary graph $\bar{G}$ with $b=(s-1)e$, see problem~(\ref{LR_MWBMP_fix_x*}) above. Thus, by Lemma~\ref{lemma_fractional_b-matching_polytope} we know that the vertices of $P'_s(G)$ are half-integral, i.e. $y^*_{ij}\in\big\{0,\frac{1}{2},1\big\}$ for every $\{i,j\}\in\bar{E}$. Let us assume, by contradiction, that there exists an edge $\{i,j\}\in\bar{E}$ such that $y^*_{ij}=\frac{1}{2}$ is fractional. By
Lemma~\ref{lemma_fractional_b-matching_polytope}, the fractional support of $y^*$ contains an odd cycle,
and every edge of this cycle has value $1/2$. Fix one such edge
$\{i,j\}\in\bar E$ and let
\[
\hat y:=y^*-\varepsilon\, e_{ij},
\qquad 0<\varepsilon\leq\frac12,
\]
where $e_{ij}\in\mathbb{R}^{\bar m}$ denotes the unit vector associated with the missing edge $\{i,j\}$. Since the constraints defining $P'_s(G)$ are upper-bound
constraints with non-negative coefficients, $\hat y\in P'_s(G)$. By the convexity of $\psi$,
\[
\psi\left(\frac12-\varepsilon\right)
-\psi\left(\frac12\right)
\geq
-\varepsilon\psi'\left(\frac12\right).
\]
Consequently,
\begin{align*}
g_G(\hat y)-g_G(y^*)
&=
-2\varepsilon x_i^*x_j^*
+\psi\left(\frac12-\varepsilon\right)
-\psi\left(\frac12\right)\\
&\geq
-\varepsilon\left(
2x_i^*x_j^*+\psi'\left(\frac12\right)
\right)>0,
\end{align*}
where the last inequality follows from
$2x_i^*x_j^*\leq 1/2$ and condition~(R2).
This contradicts the local optimality of $y^*$ and finally shows that  $y^*_{ij}\in\{0,1\}$ for every $\{i,j\} \in \bar{E}$, so $y^*$ is binary. \\
After showing this first property, let us fix the $y$-component of (\ref{general_regularized_formulation_MsPP}) instead, so $y\equiv y^*$. Then $x^*$ must be a local maximizer of the function 
    \begin{align*}
        h_G(x,y^*)= x^\top (A_G + A_{\bar{G}}(y^*))x + \Phi(x) = x^\top A_{G^*}x + \Phi(x)
    \end{align*}
    over the standard simplex, where $G^*$ is the auxiliary graph induced by $y^*\in P_s(G)$. As we can see, the considered optimization problem coincides with the regularized version of the maximum clique formulation (\ref{regularized_MS_MCP}) over $G^*$ given in \cite{Hungerford}, thus it must hold that $x^*=x(C)$ for $C$ a maximal clique in $G^*$. Since the auxiliary graph $G^*$ is defined by adding the missing edges in $G$ corresponding to the $\operatorname{supp}(y^*)$, then $C$ must also be an $s$-plex in $G$ and $y^*_{ij} = 1$ for all $\{i,j\}\in\bar{E}(C)$. \\

    \fbox{$(iii)\Longrightarrow(ii)$} Let $(x^*,y^*)\in\Delta_n\times P'_s(G)$ satisfy the properties outlined in the  statement $(iii)$ above. To show that it is a strict local maximizer of $(\ref{general_regularized_formulation_MsPP})$, we want to verify that the second-order sufficient optimality conditions (\ref{general_second_order_sufficient_condition_1}) and (\ref{general_second_order_sufficient_condition_2}) hold in this case. 
    
    For any missing edge $\{i,j\}\in\bar{E}$ we can compute 
    \begin{align*}
        \frac{\partial}{\partial y_{ij}}h_G(x^*,y^*) = 2x^*_ix^*_j + \psi'(y^*_{ij}).
    \end{align*}
    In the following we denote by $S:= \operatorname{supp}(y^*)$, and $\bar{S}:=\bar{E}\setminus S$. Since by assumption $y_{ij}^*=1$ for any $\{i,j\}\in\bar{E}(C)$, clearly $2x_l^*x_k^*=0$ for any $\{l,k\}\in\bar{S}$. Moreover, by conditions \ref{cond_R1} and \ref{cond_R3} we necessarily have $\psi'(0)<0$ and $\psi'(1)>0$. Thus, for $\{l,k\}\in\bar{S}$ we get
    \begin{align}\label{general_derivate_by_y_not_in_S}
        \frac{\partial}{\partial y_{lk}}h_G(x^*,y^*) = \psi'(0)<0,
    \end{align}
    while for $\{i,j\}\in S$
    \begin{align}\label{general_derivate_by_y_in_S}
        \frac{\partial}{\partial y_{ij}}h_G(x^*,y^*) = 2x^*_ix^*_j + \psi'(1) >0,
    \end{align}
    where we use the fact that $y^*\in P_s(G)$ is binary.  
    
    Now, let $d$ be a feasible direction at $(x^*,y^*)$, such that $d=(\bar{x},\bar{y}) - (x^*,y^*)$ for some $(\bar{x},\bar{y})\in\Delta_n\times P'_s(G)$. We can separately consider $d=(d_x,d_y)=(\bar{x}-x^*,\bar{y}-y^*)$, where $d_x$ and $d_y$ are feasible directions for the $x$ and $y$ components of the optimization problem at $x^*$ and $y^*$, respectively. 
Furthermore, for any $U\subseteq V$, we denote by $d_U:=(d_x)_U$ the restriction of the $x$-component of $d$ to the coordinates indexed by $U$.

    Then we can compute
    \begin{align}\label{first_order_condition_in_x}
        \begin{split}
            \nabla_xh_G(x^*,y^*)^\top d_x &= (2A_Gx^*+2A_{\bar{G}}(y^*)x^*+\nabla\Phi(x^*))^\top d_x \\ &= 2x(C)^\top A_{G^*}d_x + \nabla\Phi(x(C))^\top d_x\leq0,
        \end{split}
    \end{align}
    where the inequality follows by Lemma \ref{negative_scalar_product_with_the_matrix} and Lemma \ref{negative_scalar_product_with_the_penalty_term} with respect to the MCP over the auxiliary graph $G^*$ induced by $y^*$. 
    
    For the $y$-component we can instead compute
    \begin{align*}
            (\bar{y} - y^*)^\top \nabla_yh_G(x^*,y^*) &= \sum_{\{i,j\}\in \bar{E}} (\bar{y}_{ij}-y^*_{ij}) \frac{\partial}{\partial y_{ij}}h_G(x^*,y^*) \\
            &= \sum_{\{i,j\}\in S} (\bar{y}_{ij}-1) \frac{\partial}{\partial y_{ij}}h_G(x^*,y^*)\ + \sum_{\{i,j\}\in\bar{S}} \bar{y}_{ij} \frac{\partial}{\partial y_{ij}}h_G(x^*,y^*)
    \end{align*}
    thus
    \begin{align}\label{first_order_condition_in_y}
        \begin{split}
            \nabla_yh_G(x^*,y^*)^\top d_y \leq 0
        \end{split}
    \end{align}
    by inequalities (\ref{general_derivate_by_y_not_in_S}), (\ref{general_derivate_by_y_in_S}), and the fact that $\bar{y}_{ij}\leq 1$. 
    
To summarize, combining~(\ref{first_order_condition_in_x}) and (\ref{first_order_condition_in_y}) we can conclude that
    \begin{align*}
        \nabla h_G(x^*,y^*)^\top d = \nabla_xh_G(x^*,y^*)^\top d_x + \nabla_yh_G(x^*,y^*)^\top d_y \leq 0.
    \end{align*}
    This finally proves that first-order condition (\ref{general_second_order_sufficient_condition_1}) is satisfied.

Now, in order to show that the second condition
(\ref{general_second_order_sufficient_condition_2}) is also satisfied,
notice that both terms in
\[
\nabla h_G(x^*,y^*)^\top d
=
\nabla_xh_G(x^*,y^*)^\top d_x
+
\nabla_yh_G(x^*,y^*)^\top d_y
\]
are nonpositive by (\ref{first_order_condition_in_x}) and
(\ref{first_order_condition_in_y}). Therefore, equality can hold only if
both terms vanish. By the strict part of Lemma
\ref{negative_scalar_product_with_the_penalty_term} and the strict
inequalities
(\ref{general_derivate_by_y_not_in_S}) and
(\ref{general_derivate_by_y_in_S}), this requires
$(\bar{x},\bar{y})$ to satisfy
\begin{itemize}
    \item $\operatorname{supp}(\bar{x})\subseteq C$;
    \item $\bar{y}_{ij}=1$ for every $\{i,j\}\in S$, while
    $\bar{y}_{lk}=0$ for every $\{l,k\}\in\bar{S}$.
\end{itemize}
In particular, the second condition is equivalent to $\bar{y}=y^*$.
    % Now, in order to show that the second condition (\ref{general_second_order_sufficient_condition_2}) is also valid, notice that in the final scalar product we can have equality if and only if there are equalities in both (\ref{first_order_condition_in_x}) and (\ref{first_order_condition_in_y}), so if and only if $(\bar{x},\bar{y})$ is a feasible solution such that:
    % \begin{itemize}
    %     \item $\operatorname{supp}(\bar{x}) \subseteq C$, by the second statement in Lemma \ref{negative_scalar_product_with_the_penalty_term}
    %     \item $\bar{y}_{ij} = 1$ for $\{i,j\}\in S$, while $\bar{y}_{lk}=0$ for $\{l,k\}\in\bar{S}$.
    % \end{itemize}
    % In particular, this second condition implies that $\bar{y}=y^*$. 
We now show that these conditions are also sufficient for equality. Indeed, if $\operatorname{supp}(\bar{x})\subseteq C$, then $d_x=\bar{x}-x^*$ is supported on $C$ and \[ e^\top d_C = e^\top\bar{x}_C-e^\top x_C^* = 1-1=0. \] Since $C$ is a clique in the auxiliary graph $G^*$, the components of the vector $A_{G^*}x(C)$ indexed by $C$ are all equal to $\frac{|C|-1}{|C|}$. Moreover, condition \ref{cond_C3} implies that the components of $\nabla\Phi(x(C))$ indexed by $C$ are equal, since $x(C)$ is unchanged by every permutation of the vertices within $C$. Therefore, there exists $\lambda\in\mathbb{R}$ such that $ \nabla_C\Phi(x(C))=\lambda e, $ and hence \begin{align*} \nabla_xh_G(x^*,y^*)^\top d_x &= 2x(C)^\top A_{G^*}d_x +\nabla\Phi(x(C))^\top d_x\\ &= \left( 2\frac{|C|-1}{|C|}+\lambda \right)e^\top d_C =0. \end{align*} Furthermore, $\bar{y}=y^*$ implies $d_y=\underline{0}$ and therefore $ \nabla_yh_G(x^*,y^*)^\top d_y=0. $

Thus, the two conditions above are also sufficient for equality, and we obtain \begin{align*} T^0((x^*,y^*),\nabla h_G(x^*,y^*)) &= \left\{ d\in T(x^*,y^*) \ \middle|\ d=(\bar{x},\bar{y})-(x^*,y^*),\ \bar{x}_{\bar C}=\underline{0},\ \bar{y}=y^* \right\}\\ &= \left\{ d=(d_x,d_y)\in T(x^*,y^*) \ \middle|\ d_{\bar C}=\underline{0},\ d_y=\underline{0} \right\}. \end{align*}
     % Thus, $(x^*,y^*)$ is a first-order stationary point with:
     % \begin{align*}
     %     T^0((x^*,y^*),\nabla h_G(x^*,y^*)) & = \{d\in T(x^*,y^*)\ |\ d=(\bar{x},\bar{y})-(x^*,y^*),\ \bar{x}_{\bar{C}} =\underline{0},\ \bar{y} = y^*\} 
     %     \\ 
     %    &=  \{d\in T(x^*,y^*)\ |\ d_{\bar{C}}=d_S=d_{\bar{S}}=\underline{0}\}.
     % \end{align*}
For any $U\subseteq V$, we denote by $\nabla^2_{UU}h_G(x^*,y^*)$ the principal submatrix of $\nabla^2h_G(x^*,y^*)$ corresponding to the $x$-variables indexed by $U$. In particular, 
 % For the sake of simplicity, let $\nabla^2_{CC}h_G$ denote the submatrix of $\nabla^2h_G$ corresponding to the $x$-variables indexed by $C$. F
    for any distinct pair of vertices $i,j\in C$, we have \begin{align*} \nabla^2_{x_ix_j}h_G(x^*,y^*) = 2+\nabla^2_{x_ix_j}\Phi(x^*) \end{align*} since $C$ is a clique in the auxiliary graph $G^*$, while \begin{align*} \nabla^2_{x_ix_i}h_G(x^*,y^*) = \nabla^2_{x_ix_i}\Phi(x^*) \end{align*} for every $i\in C$. Therefore, \begin{align*} \nabla^2_{CC}h_G(x^*,y^*) = 2ee^\top-2\mathbb{I}_{|C|} +\nabla^2_{CC}\Phi(x^*). \end{align*}
    % For the sake of simplicity, let $\nabla^2_{CC}h_G$ denoting the sub-matrix of the Hessian matrix $\nabla^2h_G$ with both indexes in $C$. For any distinct pair of vertices $i,j\in C$ we have that
    % \begin{align*}
    %     \nabla^2_{x_ix_j}h_G(x^*,y^*)=2+\nabla^2_{x_ix_j}\Phi(x^*)
    % \end{align*}
    % since $C$ is a clique in the auxiliary graph $G^*$, while
    % \begin{align*}
    %     \nabla^2_{x_ix_i}h_G(x^*,y^*)=\nabla^2_{x_ix_i}\Phi(x^*)
    % \end{align*}
    % for any fixed vertex $i\in C$. So, we can rewrite the considered sub-matrix as
    % \begin{align*}
    %     \nabla^2_{CC}h_G(x^*,y^*)=2ee^\top  - 2\mathbb{I}_{|C|} + \nabla^2\Phi(x^*).
    % \end{align*}
Finally, let \[ d\in T^0((x^*,y^*),\nabla h_G(x^*,y^*))\setminus\{0\}. \] By the preceding characterization, $d_y=\underline{0}$ and $d_{\bar C}=\underline{0}$. Hence, only the components of $d_x$ indexed by $C$ contribute to the quadratic form, and we have $ e^\top d_C=0 $ because $d_x$ is a feasible direction for the standard simplex. Consequently, \begin{align*} d^\top\nabla^2h_G(x^*,y^*)d &= d_C^\top\nabla^2_{CC}h_G(x^*,y^*)d_C\\ &= d_C^\top \left( 2ee^\top-2\mathbb{I}_{|C|} +\nabla^2_{CC}\Phi(x^*) \right)d_C\\ &= d_C^\top \left( \nabla^2_{CC}\Phi(x^*) -2\mathbb{I}_{|C|} \right)d_C<0. \end{align*} The strict inequality follows from condition \ref{cond_C2}, since every principal submatrix of a negative definite matrix is negative definite. This proves that the second condition (\ref{general_second_order_sufficient_condition_2}) is satisfied.

    % Finally, taking any direction $d\in T^0((x^*,y^*),\nabla h_G(x^*,y^*))$ we can compute
    % \begin{align*}
    %     d^\top \nabla^2h_G(x^*,y^*)d & = d_C^\top \nabla^2_{CC}h_G(x^*,y^*)d_C  \\
    %     & = d_C^\top (2ee^\top  - 2\mathbb{I}_{|C|} + \nabla_{CC}^2\Phi(x^*))d_C \\
    %     &= d_C^\top (\nabla_{CC}^2\Phi(x^*) - 2\mathbb{I}_{|C|})d_C < 0
    % \end{align*}
    % since $\Phi$ satisfies condition \ref{cond_C2} and principal sub-matrices of a negative definite matrix are still negative definite. Moreover, the last equality holds because
    % \begin{align*}
    %     e^\top d_C = e^\top (\bar{x}_C - x^*_C) = e^\top \bar{x}_C - e^\top x^*_C = 1 - 1 = 0
    % \end{align*}
    % since $\operatorname{supp}(\bar{x}),\operatorname{supp}(x^*)\subseteq C$. This finally shows that also the second condition (\ref{general_second_order_sufficient_condition_2}) is satisfied.   
    To summarize, we have proved that the given point $(x^*,y^*)$ satisfies the second-order sufficient optimality conditions (\ref{general_second_order_sufficient_condition_1}) and (\ref{general_second_order_sufficient_condition_2}) for the considered regularized formulation (\ref{general_regularized_formulation_MsPP}) and hence it is a strict local maximizer.\\
    
    \fbox{$(ii)\Longrightarrow(i)$} is trivial.
\end{proof}

Theorem~\ref{general_local_equiv_prop} highlights a key advantage of the regularized version (\ref{general_regularized_formulation_MsPP}), namely that, unlike the original formulation (\ref{MS_splex_formulation}), it does not admit spurious local solutions, since every local maximizer corresponds to an $s$-plex in the given graph. Moreover, the $x$-component of the solution coincides with the characteristic vector of the detected $s$-plex structure. 

We conclude this section with an extension of the previous result to the global equivalence case.
\begin{corollary}\label{corollary_global_equivalence}
    A feasible point $(x^*,y^*)\in\Delta_n\times P'_s(G)$ is a global maximizer of (\ref{general_regularized_formulation_MsPP}) if and only if $y^*$ is binary and $x^*=x(C)$ for a maximum $s$-plex $C$ in $G$; moreover $y^*_{ij}=1$ for every $\{i,j\}\in\bar{E}(C)$.
\end{corollary}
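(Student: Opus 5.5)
Since every global maximizer is in particular a local maximizer, the plan is to combine Theorem~\ref{general_local_equiv_prop} with Lemma~\ref{increasing_value_result}, the monotonicity result of \cite{Hungerford}. Theorem~\ref{general_local_equiv_prop} reduces the comparison of objective values to a comparison over $s$-plexes. Both directions then rest on one observation, which we establish first: for every nonempty $s$-plex $C$ in $G$, the value $1-\frac{1}{|C|}+\Phi(x(C))$ depends only on $|C|$ and is strictly increasing in $|C|$.

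\emph{Invariance.} By condition~\ref{cond_C3}, $\Phi$ is constant on permutation orbits. For two $s$-plexes $C,C'$ with $|C|=|C'|$, the vector $x(C')$ is a permutation of $x(C)$, so $\Phi(x(C))=\Phi(x(C'))$.

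\emph{Strict monotonicity.} Take $s$-plexes $C^1,C^2$ with $|C^1|<|C^2|$. Let $C$ be a clique in $G$ of size $|C^2|$ (for instance the complete graph $K_n$ viewed on $V$ suffices). Apply Lemma~\ref{increasing_value_result} in the complete graph $K_n$ to the cliques $C^1$ and $C^2$. This gives $1-\frac{1}{|C^1|}+\Phi(x(C^1))<1-\frac{1}{|C^2|}+\Phi(x(C^2))$, because $x(C)^\top A_{K_n}x(C)=1-\frac{1}{|C|}$. If one prefers not to change graphs, use the auxiliary graph $G^*$ obtained by adding all missing edges inside $C^1$ and inside $C^2$. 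However, $G^*$ need not be built from a vector in $P_s(G)$, and Lemma~\ref{increasing_value_result} needs only that both sets are cliques in some graph. Working in $K_n$ is the cleanest choice.

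($\Leftarrow$) Let $y^*$ be binary and $x^*=x(C)$ with $C$ a maximum $s$-plex and $y^*_{ij}=1$ on $\bar E(C)$. By Theorem~\ref{MS_splex_theorem}, $f_G(x^*,y^*)=1-\frac{1}{|C|}$, since $C$ becomes a clique in $G^*$. Also $\Psi(y^*)=0$ by~\ref{cond_R1}. Hence $h_G(x^*,y^*)=1-\frac{1}{w_s^p(G)}+\Phi(x(C))$.

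A global maximizer exists by compactness and continuity. It is a local maximizer, so by Theorem~\ref{general_local_equiv_prop} it has the form $(x(C'),y')$ for some $s$-plex $C'$, and its value is $1-\frac{1}{|C'|}+\Phi(x(C'))$. Since $|C'|\le |C|$, the monotonicity step gives that this value is at most $h_G(x^*,y^*)$. Hence $(x^*,y^*)$ is also globally optimal.

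We must also check that $(x^*,y^*)$ is feasible, that is, $y^*\in P_s(G)$. The statement only prescribes $y^*$ on $\bar E(C)$, and any binary feasible completion works. The hypothesis that $y^*$ is feasible is given.

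($\Rightarrow$) Let $(x^*,y^*)$ be a global maximizer. Theorem~\ref{general_local_equiv_prop} gives that $y^*$ is binary, $x^*=x(C)$ for an $s$-plex $C$, and $y^*=1$ on $\bar E(C)$. Suppose $|C|<w_s^p(G)$. Then a maximum $s$-plex $C^\dagger$, paired with $y^\dagger$ equal to the indicator of $\bar E(C^\dagger)$ (feasible because $C^\dagger$ is an $s$-plex), has strictly larger value by the monotonicity step. This contradicts global optimality.

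The main obstacle is the monotonicity step: we need $1-\frac1k+\Phi(x(C))$ to be strictly increasing in $k$ independently of the graph. Lemma~\ref{increasing_value_result} applied in $K_n$ handles this, because its hypotheses involve only $\Phi$ satisfying (C1)--(C3) and not the graph structure.
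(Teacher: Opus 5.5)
Your proof is correct. It differs from the paper's in where the strict monotonicity comes from.

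\textbf{The paper's route.} Given a global maximizer $x(C)$ and a hypothetical larger $s$-plex $C^1$, the paper uses heredity to pick $C^2\subset C^1$ with $|C^2|=|C|$. It transfers the objective value from $C$ to $C^2$ via (C3). It then applies Lemma~\ref{increasing_value_result} inside the auxiliary graph $G^1$ induced by the feasible binary indicator $y^1$ of $\bar E(C^1)$, where both $C^2$ and $C^1$ are cliques. Every point compared is therefore feasible for the regularized problem.

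\textbf{Your route.} You observe that for any clique $k_G(x(C))=1-\frac{1}{|C|}+\Phi(x(C))$, so Lemma~\ref{increasing_value_result} is really a statement about $\Phi$ alone. Applying it in $K_n$, where every vertex subset is a clique, gives strict monotonicity in $|C|$ independently of the graph. Combined with the invariance from (C3), both directions become immediate, and you need neither heredity nor the auxiliary graph $G^1$. Your converse is also more explicit than the paper's one-line remark. That remark tacitly uses the existence of a global maximizer, the forward direction, and (C3), exactly as you spell out.

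\textbf{Two cosmetic fixes.} First, delete the sentence ``Let $C$ be a clique in $G$ of size $|C^2|$''. Such a clique need not exist in $G$, and you never use it. Second, $f_G(x(C),y^*)=1-\frac{1}{|C|}$ is a direct computation from $C$ being a clique in $G^*$. It is not a consequence of Theorem~\ref{MS_splex_theorem}.
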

\begin{proof}
    Let $(x^*,y^*)$ be a global maximizer of (\ref{general_regularized_formulation_MsPP}). By Theorem~\ref{general_local_equiv_prop} we have that $y^*$ is binary, $x^*=x(C)$ for an $s$-plex $C$  in $G$ and $y^*_{ij}=1$ for every $\{i,j\}\in\bar{E}(C)$. Moreover, the corresponding value of the objective function is 
    \begin{align*}
        h_G(x^*,y^*) = 1 - \frac{1}{|C|} + \Phi(x(C)).
    \end{align*}
    Assume by contradiction that there exists an $s$-plex $C^1\subseteq V$  in $G$ such that $|C^1|>|C|$. By the heredity property of the $s$-plex model (see \cite{clique_relaxations} and \cite{heredity} for further details) there exists an $s$-plex $C^2\subset C^1$ such that $|C|=|C^2|$. Therefore, let $(x^1,y^1)$ be the solution corresponding to the $s$-plex $C^1$, where $x^1=x(C^1)$ and $y^1_{ij}=1$ if $\{i,j\}\in\bar{E}(C^1)$ and $0$ otherwise, while $(x^2,y^2)=(x(C^2),y^1)$ is a feasible solution of (\ref{general_regularized_formulation_MsPP}) corresponding to $C^2$. With these choices, we can compute
    \begin{align*}
        h_G(x^*,y^*) = 1-\frac{1}{|C|}+\Phi(x(C)) = 1-\frac{1}{|C^2|}+\Phi(x(C^2))=h_G(x^2,y^2)
    \end{align*}
    since $\Phi$ satisfies \ref{cond_C3} and $x(C^2)\in\mathcal{P}(x(C))$. Moreover, if we now fix the $y$ component of problem~(\ref{general_regularized_formulation_MsPP}) as $y\equiv y^1$, then $(x^1,y^1)$ and $(x^2,y^2)$ are both feasible solutions for the regularized version of the MS for the MCP over the auxiliary graph $G^1$ induced by $y^1$, so
    \begin{align*}
    \begin{split}
        \max \quad & k_{G^1}(x) \\
        s.t. \quad & x\in\Delta_n
    \end{split}
    \end{align*}
    where the objective function is given by $k_{G^1}(x):=x^\top A_{G^1}x\ +\ \Phi(x)$. Finally, by Lemma \ref{increasing_value_result}, since $|C^2|<|C^1|$ by construction, we get
    \begin{align*}
        k_{G^1}(x^2)<k_{G^1}(x^1) &\Longleftrightarrow h_G(x^2,y^1)<h_G(x^1,y^1) \\
        &\Longleftrightarrow h_G(x^2,y^2)<h_G(x^1,y^1) \\ 
        &\Longleftrightarrow h_G(x^*,y^*)<h_G(x^1,y^1)
    \end{align*}
    leading to a contradiction with the global optimality of $(x^*,y^*)$.\\
    The converse follows immediately from the fact that any vector $(x^*,y^*)\in\Delta_n\times P'_s(G)$ satisfying the properties outlined in the above statement has the same value of the objective function $h_G$ as any global maximizer of (\ref{general_regularized_formulation_MsPP}).
\end{proof}

\section{Regularization functions}\label{section_4}
We present three classes of regularization terms satisfying conditions \ref{cond_C1}-\ref{cond_C3}:
\begin{itemize}
    \item $\Phi_1(x):=\frac{\alpha}{2}\|x\|_2^2$, with $\alpha\in(0,2)$;
    \item $\Phi_2(x):=\alpha_1\|x+\varepsilon e\|_p^p$ with $\varepsilon>0$, $p>2$ and $0<\alpha_1<\frac{2}{p(p-1)(1+\varepsilon)^{p-2}}$;
    \item $\Phi_3(x):=\alpha_2\sum_{i=1}^n(\exp(-\beta x_i)-1)$ with $\beta>0$ and $0<\alpha_2<\frac{2}{\beta^2}$.
\end{itemize}
Here, $\Phi_1$ for $\alpha=1$ is the two-norm regularization function introduced by Bomze \cite{evolution}. The validity of these classes, with respect to conditions \ref{cond_C1}-\ref{cond_C3}, was established in \cite{Hungerford}, to which we refer the reader for the corresponding proofs.

On the other hand, for the $y$-regularization term, we can define the following three families of regularization functions $\psi_\ell:Y_\ell\to\mathbb{R}$, where $Y_\ell$ is an open
interval containing $[0,1]$,
% $\psi(t):[0,1]\subseteq Y\longrightarrow\mathbb{R}$ 
with the indicated choices of parameters:
\begin{enumerate}
    % \item $\psi_1(t):=b_{\IB{\nu}}t(t-1)(t^{n-2} + a_{\IB{\nu}})$, for any integer $n>2$ with $a_{\IB{\nu}}>\frac{n-1}{2}\left(\frac{n-3}{n}\right)^{n-3}$ and $b_{\IB{\nu}}>\frac{2^{n-2}}{n-2}$
    \item $\psi_1(t):=b_{\nu}t(t-1)(t^{\nu-2}+a_{\nu})$, where:
\begin{itemize}
    \item for $\nu=3$, $a_3>1$ and $b_3>2$;
    \item for any integer $\nu\geq 4$ with $a_{\nu}>\frac{\nu-1}{2}\left(\frac{\nu-3}{\nu}\right)^{\nu-3}$ and $b_{\nu}>\frac{2^{\nu-2}}{\nu-2}$;
\end{itemize}
    \item $\psi_2(t):=B_k\left(\exp(kt)-1-(\exp(k)-1)t\right)$,
    % $\psi_2(t):=B_k(e^{kt}-1-(e^k-1))t$
     with $k>0$ and $B_k>\frac{1}{2}(\exp(k)-k\exp(k/2)-1)^{-1}$;
    \item $\psi_3(t):=D_q(\log(1-q)t - \log(1-tq))$, with $0<q<1$ and \\$D_q> -\frac{1}{2}\left(\log(1-q) + \frac{2q}{2-q}\right)^{-1}$.
\end{enumerate}
In the following, we show that each class satisfies conditions \ref{cond_R1}-\ref{cond_R3}. 

First, note that all three terms satisfy \ref{cond_R1} trivially. We now address the remaining conditions separately for function $\psi_1$. After some manipulations, we get 
\begin{align*}
    \psi_1'(t) = b_{\nu}(\nu t^{\nu-1} - (\nu-1)t^{\nu-2} + 2a_{\nu}t - a_{\nu}),
\end{align*}
where substituting $t=\frac{1}{2}$ leads to
\begin{align*}
    \psi_1'\left(\frac{1}{2}\right) = b_{\nu}(2-\nu)\left(\frac{1}{2}\right)^{\nu-1}<0,
\end{align*}
if we choose $b_{\nu}>0$, for any integer $\nu>2$ and independently of the parameter $a_{\nu}$. Thus, condition \ref{cond_R2} is satisfied if
\begin{align*}
    b_{\nu}(2-\nu)\left(\frac{1}{2}\right)^{\nu-1}<\ -\frac{1}{2}\  \Longleftrightarrow\ b_{\nu}>\frac{2^{\nu-2}}{\nu-2}.
\end{align*}
We now verify condition~\ref{cond_R3}. The case $\nu=3$ needs to be treated separately. Here,
\[
\psi_1''(t)=2b_3(3t+a_3-1),
\]
whose minimum over $[0,1]$ is attained at $t=0$. Therefore,
$\psi_1''(t)>0$ for every $t\in[0,1]$ whenever $a_3>1$.

Suppose now that $\nu\geq4$. We have
\[
\psi_1''(t)
=
b_{\nu}\left((\nu-1)t^{\nu-3}(\nu t-\nu+2)+2a_{\nu}\right).
\]
Now, verifying the correctness of condition \ref{cond_R3} is rather non-trivial since we also want to determine the best lower bound for the parameter $a_{\nu}$ in order to ensure the strict convexity of $\psi_1$ over the unit interval.
To ensure that $\psi_1''(t)>0$ for all $t\in[0,1]$, we first determine the minimum value of $\psi_1''$ on the unit interval. For this purpose, we separately consider the term of $\psi_1''(t)$ that depends explicitly on $t$:
\begin{align*}
    g(t):=t^{\nu-3}(\nu t-\nu+2).
\end{align*}
Since we want to determine the minimum value of the function $g$, good candidate points are on the boundary of the unit interval, where we have $g(0)=0$ and $g(1)=2$, or on the set of stationary points. So, let us compute the derivative
\begin{align*}
    g'(t) = (\nu-3)t^{\nu-4}(\nu t-\nu+2) + \nu t^{\nu-3}.
\end{align*}
One can easily check that 
\begin{align*}
    g'(t^*)=0\Longleftrightarrow t^*=\frac{\nu-3}{\nu}
\end{align*}
and in this case the function value is given by
\begin{align*}
    g(t^*)=-\left(\frac{\nu-3}{\nu}\right)^{\nu-3}<0
\end{align*}
showing that $t^*$ is the minimizer in the interval $[0,1]$ for the function $g(t)$. Thus, we finally get
\begin{align*}
    \psi_1''(t)>0&\Longleftrightarrow\psi_1''(t^*)>0\\
    &\Longleftrightarrow-(\nu-1)\left(\frac{\nu-3}{\nu}\right)^{\nu-3} + 2a_{\nu}>0 \\
    &\Longleftrightarrow a_{\nu}>\frac{\nu-1}{2}\left(\frac{\nu-3}{\nu}\right)^{\nu-3}
\end{align*}
proving the correctness of condition \ref{cond_R3} with the chosen parameter $a_{\nu}$. 
Although the lower-bound sequence
\[
L_{\nu}:=\frac{\nu-1}{2}
\left(\frac{\nu-3}{\nu}\right)^{\nu-3}
\]
is not monotone for all $\nu$, it diverges to $+\infty$ as $\nu\to+\infty$. Consequently, any sequence of admissible parameters satisfying $a_{\nu}>L_{\nu}$ also diverges.

% Notice that the obtained sequence $\{a_{\IB{\nu}}\}_{n\geq3}$ is strictly increasing since $a_{n+1}>a_{\IB{\nu}}$ for all $n\geq3$. Moreover, it is a divergence sequence as $\lim_{n\to+\infty}a_{\IB{\nu}}=+\infty$. 

In the remainder, let us instead verify the correctness of conditions \ref{cond_R2} and \ref{cond_R3} for functions $\psi_2$ and $\psi_3$ simultaneously. First, note that the function $\psi_3$ is well defined only for $q<1$. Then, by computing the derivatives in the middle of the unit interval, we have
\begin{align*}
    &\psi_2'\left(\frac{1}{2}\right)=B_k(k\exp(k/2)-\exp(k)+1) \\
    &\psi_3'\left(\frac{1}{2}\right)=D_q\left(\log(1-q)+\frac{2q}{2-q}\right)
\end{align*}
Since $k>0$ and $0<q<1$, both quantities in parentheses are negative.
Therefore, condition~\ref{cond_R2} is satisfied provided that
% so, if we choose $B_k,\IB{D_q}>0$, then by imposing condition \ref{cond_R2} we get $k>0$ and $0<q<1$ for $\psi_2$ and $\psi_3$, respectively. Thus, we can derive the conditions on the parameters as
\begin{align*}
    &B_k>\frac{1}{2(\exp(k)-k\exp(k/2)-1)} \\
    &D_q> -\frac{1}{2}\left(\log(1-q) + \frac{2q}{2-q}\right)^{-1} .
\end{align*}
Finally, condition \ref{cond_R3} is always satisfied for all $t\in[0,1]$ with the chosen parameters, because the second derivatives satisfy
\begin{align*}
    &\psi_2''(t)=B_kk^2\exp(kt) >0 \\
    &\psi_3''(t)= D_q\frac{q^2}{(1-qt)^2}>0 .
\end{align*}
Note that if the parameters $B_k$ and $D_q$ are non positive, then condition \ref{cond_R3} can never be satisfied.

\section{FW variant for the M\texorpdfstring{$s$}{s}PP}\label{section_5}
In the analysis that follows, we focus on the objective function below, which we adopt as a representative example of the class being studied:
\begin{align}
    \label{particular_objective}
    h_G(x,y) = x^\top (A_G + A_{\bar{G}}(y))x + \frac{\alpha}{2}\|x\|_2^2 + \beta\sum_{\{i,j\}\in\bar{E}}y_{ij}(y_{ij}-1)(y_{ij} + \gamma)
\end{align}
with $\alpha\in(0,2)$, $\beta>2$ and $\gamma>1$. This objective is obtained by combining $\Phi_1$ with the cubic
regularization $\psi_1$ corresponding to $\nu=3$, $a_3=\gamma$ and $b_3=\beta$. Accordingly, the conditions
$\alpha\in(0,2)$, $\beta>2$, and $\gamma>1$ ensure that
\ref{cond_C1}-\ref{cond_C3} and
\ref{cond_R1}-\ref{cond_R3} are satisfied. 
We note that the analysis below can also be extended to the other families of regularization functions introduced in Section~\ref{section_4}, although the corresponding bounds depend on the chosen family and must be derived separately.

We first recall the notion of stationarity used throughout this section.

\begin{definition}[First-order stationary point]\label{def_stationary_point}
Let $\Omega\subseteq\mathbb{R}^p$ be a nonempty compact convex set, and
let $F$ be continuously differentiable on an open set containing
$\Omega$. A point $z\in\Omega$ is called stationary for the
maximization of $F$ over $\Omega$ if
\begin{align*}
    \nabla F(z)^\top(u-z)\leq0
    \qquad\text{for every }u\in\Omega.
\end{align*}
\end{definition}

 We now describe the away-step Frank-Wolfe (AFW) method over the standard simplex, as introduced in \cite{AFW} and summarized in Algorithm \ref{AFW} below, since it plays a key role in the development of a tailored FW algorithm to solve the M$s$PP. 

In a general setting, we consider the optimization problem
\begin{align*}
    \begin{split}
        \max \quad & f(w) \\
        s.t. \quad & w\in\Delta_n
    \end{split}
\end{align*}
where we assume $f$ to be a continuously differentiable function on the standard simplex $\Delta_n$.
\begin{algorithm}
\caption{AFW on the standard simplex}
\label{AFW}
\begin{algorithmic}[1]
\STATE Initialize $w^{(0)}\in\Delta_n$, $k:=0$.
\STATE Set $s^{(k)}:=e_{\hat{i}}$, with
$\hat{i}\in\arg\max_{i=1,\dots,n}\nabla_i f(w^{(k)})$, and
$d_{FW}^{(k)}:=s^{(k)}-w^{(k)}$. \label{step:FWdir}
\STATE \textbf{If} $w^{(k)}$ is stationary, \textbf{STOP}.
\STATE Set $v^{(k)}:=e_{\hat{j}}$, with
$\hat{j}\in\arg\min_{j\in S_k}\nabla_j f(w^{(k)})$,
where $S_k:=\{j:w_j^{(k)}>0\}$, and
$d_A^{(k)}:=w^{(k)}-v^{(k)}$. \label{step:AFWdir}
% \STATE Set $(d^{(k)},\alpha_k^{\max}):=(d_{FW}^{(k)},1)$ if
% $\nabla f(w^{(k)})^\top d_{FW}^{(k)}
% \geq\nabla f(w^{(k)})^\top d_A^{(k)}$; otherwise set
% $(d^{(k)},\alpha_k^{\max})
% :=\left(d_A^{(k)},\dfrac{w_{\hat{j}}^{(k)}}{1-w_{\hat{j}}^{(k)}}\right)$.
\STATE \textbf{If} $\nabla f(w^{(k)})^\top d_{FW}^{(k)}
\geq \nabla f(w^{(k)})^\top d_A^{(k)}$
\textbf{then} $d^{(k)}:=d_{FW}^{(k)}$ and $\alpha_k^{\max}:=1$;
 \textbf{else} $d^{(k)}:=d_A^{(k)}$ and
$\alpha_k^{\max}:=\dfrac{w_{\hat{j}}^{(k)}}{1-w_{\hat{j}}^{(k)}}$. \label{step:choiceAFWd}
\STATE Choose $\alpha_k\in(0,\alpha_k^{\max}]$ using a suitable criterion.
\STATE Update $w^{(k+1)}:=w^{(k)}+\alpha_kd^{(k)}$.
\STATE Set $k:=k+1$ and go to step~\ref{step:FWdir}.
\end{algorithmic}
\end{algorithm}

% \begin{algorithm}
% \caption{AFW on the standard simplex}
% \label{AFW}
% \begin{algorithmic}[1]
% \STATE Initialize $w^{(0)}\in\Delta_{n}$, $k:=0$. 
% \STATE Set $s^{(k)}:=e_{\hat{i}}$, with $\hat{i}\in\arg\max_{i=1,\dots,n}\nabla_i f(w^{(k)})$ and $d_{FW}^{(k)}:=s^{(k)}-w^{(k)}$.
% \IF{$w^{(k)}$ is stationary} \STATE STOP 
% \ENDIF
% \STATE Let $v^{(k)}:=e_{\hat{j}}$, with $\hat{j}\in\arg\min_{j\in S_k}\nabla_j f(w^{(k)})$, where $S_k:=\{j:w^{(k)}_j>0\}$, and $d_A^{(k)}:=w^{(k)}-v^{(k)}$.
% \IF{$\nabla f(w^{(k)})^\top d_{FW}^{(k)}\geq\nabla f(w^{(k)})^\top  d_A^{(k)}$}
%     \STATE $d^{(k)}:=d_{FW}^{(k)}$,\quad $\alpha_k^{\max}:=1$
% \ELSE
%     \STATE $d^{(k)}:=d_A^{(k)}$,\quad 
%     $\alpha_k^{\max}:=\dfrac{w^{(k)}_{\hat{j}}}{1 - w^{(k)}_{\hat{j}}}$
% \ENDIF
% \STATE Choose the step-size $\alpha_k\in(0,\alpha_k^{\max}]$ with a suitable criterion.
% \STATE Update: $w^{(k+1)}:=w^{(k)}+\alpha_kd^{(k)}$.
% \STATE Set $k:=k+1$. Go to step $2$.
% \end{algorithmic}
% \end{algorithm}

As we can see from Algorithm \ref{AFW}, the AFW method at each iteration chooses between the standard FW direction $d^{(k)}_{FW}$ computed in step \ref{step:FWdir} and the away-step direction $d^{(k)}_A$ calculated in step \ref{step:AFWdir}, with the criterion in step \ref{step:choiceAFWd}. In particular, this second direction shifts the weight away from the worst vertex to the other vertices used to represent the current iterate $w^{(k)}$. For the standard simplex case, this vertex coincides with the canonical basis vector that selects the lowest component of the gradient, or equivalently, the vertex that minimizes the first-order approximation of $f$ given by $z\rightarrow f(w^{(k)}) + \nabla f(w^{(k)})^\top (z-w^{(k)})$ over the support of the current iterate $w^{(k)}$. The upper bound on the step-size $\alpha^{max}_k$ is then the largest feasible step-size for the away direction given the boundary conditions, so, when the algorithm performs an away-step, we have that either the support of the current iterate stays the same or decreases by one. In the latter case, $\alpha_k = \alpha^{max}_k$ and we remove the component whose index is associated with the away direction,  generating a new point on the boundary of the current minimal face. This drop in dimension is what allows the AFW method to quickly identify low-dimensional faces containing $w^{(k)}$, and therefore the correct support of an optimal local solution. On the other hand, when the algorithm performs a standard FW step, it selects the vertex that maximizes the first-order approximation of $f$, which corresponds to the highest component of the gradient, and adds it to the support of the current iterate $w^{(k)}$. These properties are fundamental for the active-set identification of the AFW method. 

Moreover, we often require the following lower bound on the step-sizes:
\begin{align}\label{cond1_stepsize}
    \alpha_k \geq \bar{\alpha}_k := \min \Big( \alpha^{\max}_k , c\frac{\nabla f(w^{(k)})^\top d^{(k)}}{\|d^{(k)}\|_2^2} \Big)
\end{align}
for some $c>0$ and, for convergence results, we need the following sufficient increase condition for some constant $\rho >0$:
\begin{align}\label{cond2_stepsize}
    f(w^{(k)} + \alpha_k d^{(k)}) - f(w^{(k)}) \geq \rho\bar{\alpha}_k\nabla f(w^{(k)})^\top  d^{(k)}
\end{align}
which generalizes properties of exact and Armijo line search. We refer the reader to \cite{AFW} for further details and convergence properties of the AFW algorithm.

At this point, we can introduce our tailored Frank-Wolfe variant for the maximum $s$-plex formulation (\ref{general_regularized_formulation_MsPP}), namely  FW$s$P, as presented in Algorithm \ref{ FW$s$P}, which exploits the product domain structure of the problem at hand by employing separate updating rules for the two blocks of variables $x$ and $y$. 

\begin{algorithm}
\caption{FW$s$P -- FW variant for the $s$-plex}
\label{ FW$s$P}
\begin{algorithmic}[1]
\STATE Initialize $x^{(0)}\in\Delta_n$ and $y^{(0)}\in P'_s(G)$, $k:=0$.
\STATE \textbf{If} $(x^{(k)},y^{(k)})$ is stationary, \textbf{then STOP}. \label{step:statchecka2}
\STATE Compute $x^{(k+1)}$ by applying one iteration of Algorithm~\ref{AFW}
with $w^{(0)}:=x^{(k)}$ and $f(w):=h_G(w,y^{(k)})$. \label{step:AFWa2}
\STATE Let
$y^{(k+1)}\in\arg\max_{y\in P'_s(G)}
\nabla_y h_G(x^{(k+1)},y^{(k)})^\top y$. \label{step:FWa2}
\STATE Set $k:=k+1$ and go to step~\ref{step:statchecka2}.
\end{algorithmic}
\end{algorithm}

% \begin{algorithm}
% \caption{ FW$s$P - FW variant for $s$-plex}
% \label{ FW$s$P}
% \begin{algorithmic}[1]
% \STATE Initialize $x^{(0)}\in\Delta_n$ and $y^{(0)}\in P'_s(G)$, $k := 0$.
% \IF{$(x^{(k)},y^{(k)})$ is stationary}
%     \STATE STOP
% \ENDIF
% \STATE Compute $x^{(k+1)}$ by applying one iteration of Algorithm \ref{AFW} with $w^{(0)} = x^{(k)}$ and $f(w) = h_G(w, y^{(k)})$.
% \STATE Let $y^{(k+1)} \in \arg\max_{y \in P_s'(G)} \nabla_y h_G(x^{(k+1)}, y^{(k)})^\top  y$.
% \STATE Set $k := k + 1$. Go to step $2$.
% \end{algorithmic}
% \end{algorithm}
% \bigskip

In particular, at each iteration, the method first performs one AFW iteration on the standard simplex for the $x$ variable, step \ref{step:AFWa2},  and then a full FW step on the $y$ variable, step \ref{step:FWa2}, which basically means solving a linear optimization problem. Such a problem is the linear relaxation
of a maximum-weight $b$-matching problem on the complement graph
$\bar{G}$, with capacity vector $b=(s-1)e$ and edge-weight vector
$\nabla_y h_G(x^{(k+1)},y^{(k)})$. Throughout the following analysis, we assume that the linear
optimization oracle used in step~\ref{step:FWa2} always returns an extreme-point optimal
solution. This entails no loss of generality, since a linear function
over the nonempty compact polytope $P'_s(G)$ always admits an optimal
solution at an extreme point. Consequently, $y^{(k)}$ is a vertex of $P'_s(G)$ for every $k\geq1$, and, by Lemma \ref{lemma_fractional_b-matching_polytope}, we have that $y^{(k)}_{ij}\in\{0,\frac{1}{2},1\}$ for every $\{i,j\}\in\bar{E}$ . Notice that the order in which these updates are performed is essential, since it allows us to compute the $y^{(k+1)}$ component of the new iterate as a function of the newly computed $x^{(k+1)}$ component, as highlighted in step \ref{step:FWa2} of Algorithm \ref{ FW$s$P}. Furthermore, as stated in the next proposition, the sequence $\{y^{(k)}\}_k$ is ultimately constant. This finally allows us to obtain convergence results by applying the general properties of the AFW method on the standard simplex for the $x$ component, as proved in \cite{AFW}. 

% \IB{Note that the monotonicity assumption on the sequence $h_G(x^{(k+1)}, y^{(k)})$ in the result stated below follows from the sufficient increase condition~\eqref{cond2_stepsize} which is employed in the convergence Theorem~\ref{th_convergence} anyhow.}

\begin{proposition}\label{prop_convergence_y}
    In Algorithm \ref{ FW$s$P}, if $h_G(x^{(k+1)},y^{(k)})
\geq h_G(x^{(k)},y^{(k)})$ for every  $k\geq 0$, 
        then the sequence $\{y^{(k)}\}_k$ converges to a binary vertex of $P'_s(G)$ and can change at most $\frac{2}{\beta(\gamma-1)}\Big(1-\frac{2-\alpha}{2|C^*|} - \frac{\alpha}{2n} + \frac{\beta\bar{m}}{4}(\gamma + 1)\Big)$ times, with an $s$-plex $C^*$ in $G$ of maximum cardinality.
\end{proposition}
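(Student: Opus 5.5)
The plan is to use the monotonicity hypothesis together with the strong convexity of $h_G$ in $y$. First I show that every change of $y$ increases the objective by a fixed positive amount. Then I bound the total possible increase of $h_G$ over the feasible set, which bounds the number of changes. Finally I check directly that the eventual constant value of $y$ is binary.

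\emph{Step 1: uniform increase at each $y$-change.} For fixed $x$, the map $y\mapsto h_G(x,y)$ is separable in $y$. Each term is $\psi(t)=\beta t(t-1)(t+\gamma)$, with $\psi''(t)=2\beta(3t+\gamma-1)\geq 2\beta(\gamma-1)>0$ on $[0,1]$. So it is strongly convex with modulus $\mu=2\beta(\gamma-1)$. Since $y^{(k+1)}$ maximizes the linearization $\nabla_y h_G(x^{(k+1)},y^{(k)})^\top y$ over $P'_s(G)$, strong convexity gives
\[
h_G(x^{(k+1)},y^{(k+1)})-h_G(x^{(k+1)},y^{(k)})\;\geq\;\nabla_y h_G(x^{(k+1)},y^{(k)})^\top(y^{(k+1)}-y^{(k)})+\beta(\gamma-1)\|y^{(k+1)}-y^{(k)}\|_2^2,
\]
and the linear term is nonnegative. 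Combining this with the hypothesis $h_G(x^{(k+1)},y^{(k)})\geq h_G(x^{(k)},y^{(k)})$, the sequence $h_G(x^{(k)},y^{(k)})$ is nondecreasing. Whenever $y^{(k+1)}\neq y^{(k)}$, it increases by at least $\beta(\gamma-1)\|y^{(k+1)}-y^{(k)}\|_2^2$.

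\emph{Step 2, the main obstacle: distinct iterates are far apart.} I need $\|y^{(k+1)}-y^{(k)}\|_2^2\geq\tfrac12$ whenever the two iterates differ. For $k\geq1$ both are vertices of $P'_s(G)$, hence half-integral by Lemma~\ref{lemma_fractional_b-matching_polytope}. The first iteration, where $y^{(0)}$ may be an arbitrary point, can be absorbed separately; I would either count it as one extra change or assume a vertex start. If some coordinate differs by $1$, the bound holds. Otherwise all differences are $\pm\frac12$, and I must rule out that exactly one coordinate differs. Such a single-edge change alters the half-valued edge set $F$ by exactly one edge. An odd cycle then loses an edge and becomes a path, or an edge is added to a disjoint union of odd cycles. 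In either case the half-valued edges of one of the two points are no longer a disjoint union of odd cycles, which contradicts Lemma~\ref{lemma_fractional_b-matching_polytope}. Hence at least two coordinates differ, and the increase per change is at least $\beta(\gamma-1)/2$. I expect this combinatorial case analysis on the structure of $F$ to be the delicate point.

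\emph{Step 3: range of $h_G$ and the count.} For the upper bound, Corollary~\ref{corollary_global_equivalence} gives $\max h_G=1-\frac1{|C^*|}+\frac{\alpha}{2|C^*|}=1-\frac{2-\alpha}{2|C^*|}$. For the lower bound on the feasible set, I use $f_G\geq0$, $\frac{\alpha}{2}\|x\|_2^2\geq\frac{\alpha}{2n}$, and $\psi(t)\geq-\frac{\beta(\gamma+1)}{4}$, the last because $t(1-t)\leq\frac14$ and $t+\gamma\leq 1+\gamma$. The total increase is therefore at most $1-\frac{2-\alpha}{2|C^*|}-\frac{\alpha}{2n}+\frac{\beta\bar m(\gamma+1)}{4}$. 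Dividing by $\beta(\gamma-1)/2$ gives the stated bound on the number of changes. So $y^{(k)}$ is eventually constant, equal to some vertex $\bar y$.

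\emph{Step 4: the limit is binary.} Suppose $\bar y_{ij}=\frac12$ for some edge. Then $\bar y$ is an optimal extreme point of the linear program in step~\ref{step:FWa2} with weights $\nabla_y h_G(x^{(k+1)},\bar y)$. The weight on that edge is $2x_ix_j+\psi'(\tfrac12)<\tfrac12-\tfrac12=0$, using (R2). Lowering that coordinate to $0$ stays feasible, because the constraints are downward closed, and strictly increases the linear objective. This contradicts optimality, exactly as in the proof of Theorem~\ref{general_local_equiv_prop}. Hence $\bar y$ is binary.
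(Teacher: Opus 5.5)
Your proof is correct and takes the paper's route: strong convexity of $y\mapsto h_G(x,y)$ with $\mu=2\beta(\gamma-1)$, a squared distance of at least $1/2$ between distinct half-integral vertices (you justify this more carefully than the paper's one-line appeal to the odd-cycle structure), and telescoping against $\max h_G-\min h_G$; your Step~4, which derives binarity by contradiction from the LP-optimality of the stabilized vertex, is a tidier version of the paper's remark that one further iteration would zero out any fractional entries. Your caveat about a non-vertex $y^{(0)}$ is genuinely needed, since the paper's argument also covers only changes between vertices ($k\geq1$): for the path $1$--$3$--$2$ with $s=2$, $\alpha=1$, $\beta=3$, $\gamma=100$, the stated bound is about $0.51$, yet from $y^{(0)}=0.3$ the $y$-weight $2x^{(1)}_1x^{(1)}_2+\psi'(0.3)$ is negative, so the oracle must return $y^{(1)}=0\neq y^{(0)}$.
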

\begin{proof}
    First, we show that the function $y\rightarrow h_G(x,y)$ is strongly convex on $[0,1]^{\bar{m}}\supset P'_s(G)$ with a convexity coefficient $\mu = 2\beta(\gamma - 1)>0$, since $\gamma>1$. Since this function is separable with respect to $y$, in the one-dimensional case we can consider $f(t):=w(x)t + \beta t(t-1)(t+\gamma) + c(x)$, $\forall t\in[0,1]$,  for fixed $x$. Although $w(x)$ and $c(x)$ may depend on $x$, they are respectively a coefficient of a linear term and a constant term with respect to $t$; therefore they do not contribute to the second derivative. Indeed, we get $f''(t)=2\beta(3t + \gamma-1)\geq2\beta(\gamma-1)$, since $f''$ reaches its minimum in $[0,1]$ for $t=0$. This shows that $f$ is strongly convex in $[0,1]$ with the strong-convexity coefficient $\mu:=2\beta(\gamma-1)$. Because the Hessian with respect to $y$ of $h_G(x,y)$ is diagonal with entries $f''(y_{ij})$, the same bound applies in $\bar{m}$ dimensions. 
    
    Now assume that $y^{(k)}$ and $y^{(k+1)}$ are two distinct vertices of $P'_s(G)$. Let $z^{(k)}=(x^{(k+1)}, y^{(k)})$ and $z^{(k+1)}=(x^{(k+1)}, y^{(k+1)})$. By the strong convexity just established  for $y\rightarrow h_G(x,y)$, we have
    \begin{align*}
        h_G(z^{(k+1)}) - h_G(z^{(k)})&\geq\nabla h_G(z^{(k)})^\top (z^{(k+1)}-z^{(k)}) + \frac{\mu}{2}\|z^{(k+1)} - z^{(k)}\|^2 \\
        &= \nabla_yh_G(z^{(k)})^\top (y^{(k+1)} - y^{(k)}) + \frac{\mu}{2}\|y^{(k+1)} - y^{(k)}\|^2 \\
        &\geq \frac{\mu}{2}\|y^{(k+1)} - y^{(k)}\|^2\geq \frac{\mu}{2}\cdot\frac{1}{2} = \frac{\mu}{4}. 
    \end{align*}
    The first-order term is non-negative because $y^{(k+1)}$ is an
optimal solution of the linear problem in step~\ref{step:FWa2} of Algorithm \ref{ FW$s$P} and $y^{(k)}$ is feasible. Moreover, the squared distance between two distinct vertices of $P'_s(G)$ is at least $1/2$. Indeed, the vertices are
half-integral, and a single difference of size $1/2$ would contradict the odd-cycle characterization of their fractional support.
    % since the square distance between two distinct vertices of $P'_s(G)$ is at least $\frac{1}{2}$, as they differ in at least two fractional components. 
    % Notice that the scalar product with the gradient is non-negative since we are computing $y^{(k+1)}$ as the vertex that indeed maximizes the considered quantity, see Step $6$ of Algorithm \ref{ FW$s$P}. 
The previous inequality accounts only for the $y$ update. To obtain an increase between two consecutive iterates, we use the assumption  on the $x$ update. As a consequence, whenever  $y^{(k+1)}\neq y^{(k)}$, we have $h_G(x^{(k+1)},y^{(k+1)})
-h_G(x^{(k)},y^{(k)})
\geq \frac{\mu}{4}.$  Hence, these increases can be telescoped over all iterations in which
the $y$-component changes.
    Therefore, $y^{(k)}$ can change at most 
    \begin{align*}
        \max_{(x,y)\in\Delta_n\times P'_s(G)} &\frac{h_G(x,y)-h_G(x^{(0)},y^{(0)})}{\frac{\mu}{4}} \\
        &\leq\frac{4}{\mu}\Big(\max_{(x,y)\in\Delta_n\times P'_s(G)}h_G(x,y)\ \ - \min_{(x,y)\in\Delta_n\times P'_s(G)}h_G(x,y)\Big) \\
        &\leq \frac{4}{\mu}\Big(1-\frac{2-\alpha}{2|C^*|} - \frac{\alpha}{2n} + \frac{\beta\bar{m}}{4}(\gamma + 1)\Big) \\
        &= \frac{2}{\beta(\gamma-1)}\Big(1-\frac{2-\alpha}{2|C^*|} - \frac{\alpha}{2n} + \frac{\beta\bar{m}}{4}(\gamma + 1)\Big)
    \end{align*}
    times, where we used Corollary \ref{corollary_global_equivalence} with an $s$-plex $C^*$ of maximum cardinality in $G$, and the fact that
    \begin{align*}
        \min_{(x,y)\in\Delta_n\times P'_s(G)}h_G(x,y) &\geq \frac{\alpha}{2n} + \min_{y\in P'_s(G)}\beta\sum_{\{i,j\}\in\bar{E}}y_{ij}(y_{ij}-1)(y_{ij}+\gamma)\\
        &\geq\frac{\alpha}{2n} + \beta\bar{m}\min_{t\in[0,1]}t(t-1)(t+\gamma)\\
        &=\frac{\alpha}{2n} -\beta\bar{m}\max_{t\in[0,1]}t(1-t)(t+\gamma)\\
        &\geq\frac{\alpha}{2n} -\beta\bar{m}\max_{t\in[0,1]}t(1-t)\cdot\max_{t\in[0,1]}(t+\gamma)\\
        &=\frac{\alpha}{2n} -\frac{\beta\bar{m}}{4}(\gamma + 1).
    \end{align*}
This is obtained by considering that the adjacency contribution to $h_G$ is non-negative and $||x||_2^2\geq1/n$ for every $x\in\Delta_n$. 
Finally, if after this number of iterations the $y$-component of the solution corresponds to a fractional vertex of $P'_s(G)$, then the algorithm simply sets all fractional components to zero in the subsequent iteration. This occurs because the weights given by the gradient and associated with the fractional components of the solution are strictly negative in this case, since $\beta>2$.
\end{proof}

\begin{remark}
For $\beta>0$ and $\gamma>1$, the strong-convexity argument still
implies finite stabilization of the $y$-sequence, using only the
boundedness of the objective on the compact feasible set. The stronger
condition $\beta>2$ is needed to ensure that the stabilized vertex is
binary and to invoke the equivalence results of Section~\ref{section_3}.
We also observe that the monotonicity assumption in Proposition~\ref{prop_convergence_y} is satisfied under the hypotheses of the convergence result below. Indeed, condition~\eqref{cond2_stepsize}, together with the fact that the direction selected by the AFW method is an ascent direction, implies
$
h_G(x^{(k+1)},y^{(k)})
\geq h_G(x^{(k)},y^{(k)}).
$
Consequently, Proposition~\ref{prop_convergence_y} applies under the assumptions of Theorem~\ref{th_convergence}.

\end{remark}

% \textcolor{blue}{\begin{remark}
%     Notice that for proving the strong convexity we only require $\gamma>1$ and $\beta>0$, so we can relax the $\beta>2$ constraint at least in the numerical experiments and we still have this convergence result for the $y$-component. Indeed the constraint $\beta>2$ serves to be sure that the $y$-component of a local optimal solution is a binary vertex of $P'_s(G)$.
% \end{remark}}

We finally show the convergence of the newly proposed method.
\begin{theorem}\label{th_convergence}
    Let $\{(x^{(k)}, y^{(k)})\}_k$ be the sequence of points generated by Algorithm \ref{ FW$s$P}, with regularization coefficient $\alpha=1$. If conditions \eqref{cond1_stepsize} and \eqref{cond2_stepsize} hold on the step-sizes, then the sequence converges to a first-order stationary point in the
sense of Definition~\ref{def_stationary_point} and identifies the
support of its $x$-component in finite time.
\end{theorem}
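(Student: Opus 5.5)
Assuming conditions \eqref{cond1_stepsize} and \eqref{cond2_stepsize}, the plan is to reduce everything to the AFW theory on the standard simplex from \cite{AFW}, once the $y$-block has frozen. First, \eqref{cond2_stepsize} and the ascent property of the direction chosen in step~\ref{step:choiceAFWd} of Algorithm~\ref{AFW} give $h_G(x^{(k+1)},y^{(k)})\ge h_G(x^{(k)},y^{(k)})$ for every $k$. So the hypothesis of Proposition~\ref{prop_convergence_y} holds, and there is an index $\bar k$ and a binary vertex $y^*\in P_s(G)$ with $y^{(k)}=y^*$ for all $k\ge\bar k$. From $\bar k$ on, Algorithm~\ref{ FW$s$P} is exactly Algorithm~\ref{AFW} applied to the quadratic
\[
f(x)=h_G(x,y^*)=x^\top\bigl(A_{G^*}+\tfrac{1}{2}\mathbb{I}_n\bigr)x + \mathrm{const},
\]
where $G^*$ is the auxiliary graph induced by $y^*$ and we use $\alpha=1$. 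This is the regularized MS objective of \cite{Hungerford} (with $\Phi_1$, $\alpha=1$, i.e. Bomze's regularization) on $G^*$, with constant step-size rules \eqref{cond1_stepsize}--\eqref{cond2_stepsize}.

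Next I would invoke the convergence results of \cite{AFW} for AFW on the simplex. Under \eqref{cond1_stepsize}--\eqref{cond2_stepsize}, every limit point of $\{x^{(k)}\}$ is stationary for $f$ on $\Delta_n$. To get convergence of the whole sequence, I would use that the stationary points of $x^\top(A_{G^*}+\tfrac12\mathbb{I})x$ on $\Delta_n$ are finitely many and isolated. This is the key role of $\alpha=1$: by Bomze's result, the regularized matrix $A_{G^*}+\tfrac12\mathbb{I}_n$ has isolated KKT points, which are the barycenters of certain vertex sets, since on each face the stationarity system is nondegenerate. Because the increments satisfy $\|x^{(k+1)}-x^{(k)}\|\to0$ (from sufficient increase and boundedness of $h_G$), the limit set is connected. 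A connected subset of a finite set is a singleton, so $x^{(k)}\to x^*$. Hence $(x^{(k)},y^{(k)})\to(x^*,y^*)$.

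The step I expect to be the main obstacle is checking that $(x^*,y^*)$ is jointly stationary in the sense of Definition~\ref{def_stationary_point}. Stationarity in $x$ comes from the AFW result. For $y$, passing to the limit in step~\ref{step:FWa2} with $y^{(k+1)}=y^{(k)}=y^*$ shows that $y^*$ maximizes $\nabla_y h_G(x^{(k+1)},y^*)^\top y$ over $P'_s(G)$ for all large $k$. By continuity, it also maximizes $\nabla_y h_G(x^*,y^*)^\top y$, which gives $\nabla_y h_G(x^*,y^*)^\top(u-y^*)\le 0$. Since the feasible set is a product, adding the $x$- and $y$-inequalities yields stationarity over $\Delta_n\times P'_s(G)$.

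Finally, finite support identification follows from the active-set identification result of \cite{AFW} for AFW on the simplex. That result requires strict complementarity at $x^*$: $\nabla_i f(x^*)<\max_j\nabla_j f(x^*)$ for $i\notin\operatorname{supp}(x^*)$. If this is not automatic, I would either add it as an assumption, as with the ``suitable conditions'' in the abstract, or verify it from the structure of stationary points for $\alpha=1$. Given it, after finitely many iterations the away steps drop the wrong indices, and the FW steps never add them, so $\operatorname{supp}(x^{(k)})=\operatorname{supp}(x^*)$ for all large $k$.
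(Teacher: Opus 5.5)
There is a genuine gap: strict complementarity, which you need for finite support identification, is left unproved. Apart from that, your architecture is the paper's. Monotonicity from \eqref{cond2_stepsize} feeds Proposition~\ref{prop_convergence_y}, so the $y$-block freezes at a binary $y^*$. The $x$-iteration then becomes AFW for $x^\top Q_{G^*}x$ with $Q_{G^*}=A_{G^*}+\frac12\mathbb{I}_n$, and $y$-stationarity follows by passing to the limit in the linear subproblem.

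On strict complementarity, you propose either to assume it or to verify it later. The theorem has no such hypothesis, so assuming it would prove a weaker statement. The missing fact is that for $\alpha=1$ and binary $y^*$, every stationary point of $x^\top Q_{G^*}x$ on $\Delta_n$ is automatically strictly complementary. The paper imports this from Theorem~2.5 of \cite{massaro2002complementary}. You can also see it directly by parity:
\begin{itemize}
\item Let $x$ be stationary with support $S$, and put $M:=2(Q_{G^*})_{SS}=2(A_{G^*})_{SS}+\mathbb{I}$. This is an integer matrix congruent to the identity modulo $2$, so $\det M$ is odd.
\item With $u:=\operatorname{adj}(M)e\in\mathbb{Z}^{|S|}$ and $\lambda:=x^\top Q_{G^*}x>0$, the system $(Q_{G^*})_{SS}x_S=\lambda e$, $e^\top x_S=1$ gives $x_S=u/(e^\top u)$ and $\lambda=\det M/(2e^\top u)$.
\item For $j\notin S$, the equality $(Q_{G^*}x)_j=\lambda$ would read $2\sum_{i\in S}(A_{G^*})_{ji}u_i=\det M$, i.e. an even integer equal to an odd one.
\end{itemize}
This, and not only nonsingularity, is where $\alpha=1$ and the integrality of $A_{G^*}$ (i.e. binariness of $y^*$) are used.

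Two further points. First, your connectedness argument for convergence of the whole sequence rests on $\|x^{(k+1)}-x^{(k)}\|\to0$. This does not follow from \eqref{cond1_stepsize}--\eqref{cond2_stepsize}. Sufficient increase only makes $\sum_k\bar\alpha_k\nabla f(x^{(k)})^\top d^{(k)}$ finite, while the accepted $\alpha_k$ may be much larger than $\bar\alpha_k$; the claim would hold if $\alpha_k=\bar\alpha_k$. The paper avoids this by appealing to Theorem~4.5 of \cite{AFW}. Its hypotheses are finitely many stationary points, all strictly complementary, with distinct supports, and it delivers convergence and finite identification at once. Once strict complementarity is in hand, you should close the argument the same way.

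Second, stationary points are not in general barycenters of vertex sets. For example, if $G^*$ is a path on three vertices, $(\frac15,\frac35,\frac15)$ is stationary. What you actually need, and correctly invoke, is finiteness: at most one stationary point lies in the relative interior of each face. This follows from nonsingularity of all principal submatrices of $Q_{G^*}$ (Theorem~8 of \cite{bomze1999genetic}).
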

\begin{proof} By Proposition~\ref{prop_convergence_y}, after finitely many iterations, the $y$-variable generated by Algorithm~\ref{ FW$s$P} stabilizes at a binary vertex $y^*\in P'_s(G)$. Thereafter, the algorithm reduces to the AFW method applied over the standard simplex to maximize the quadratic function
$
h_G(x,y^*)=x^\top A_{G^*}x+\frac{\alpha}{2}\lVert x\rVert_2^2.
$ As already noticed, the considered optimization problem corresponds to a regularized version of the MS for the MCP over the auxiliary graph $G^*$ induced by the binary vector $y^*$. 
    
   For $\alpha=1$, the fixed-$y$ objective can be written as $h_G(x,y^*)
=
x^\top Q_{G^*}x,$ with 
$Q_{G^*}:=A_{G^*}+\frac{1}{2}\mathbb I_n.
$
% Let $\widehat Q$ be any principal submatrix of $Q_{G^*}$ of order $r$, and let $\widehat A$ be the corresponding principal submatrix of $A_{G^*}$. Then
% \begin{align*}
% 2\widehat Q=2\widehat A+\mathbb I_r.
% \end{align*}
% Since $\widehat A$ has integer entries, the matrix $2\widehat Q$ reduces to $\mathbb I_r$ modulo $2$. Therefore, $\det(2\widehat Q)$ is an odd integer and hence is nonzero. Since
% \begin{align*}
% \det(2\widehat Q)=2^r\det(\widehat Q),
% \end{align*}
% we also have $\det(\widehat Q)\neq0$. 
By the argument used in the proof of Theorem~$12$ in \cite{bomze1999genetic}, every principal submatrix of $Q_{G^*}$ is nonsingular. Consequently, by Theorem~$8$ therein, there can be at most one stationary point in the relative interior of each face of the standard simplex when $y^*$ is fixed. Furthermore, Theorem~$2.5$ in \cite{massaro2002complementary} implies that there exist finitely many stationary points of the fixed-$y$ problem, all satisfying strict complementarity and having distinct supports. Therefore, all the assumptions of Theorem~$4.5$ in \cite{AFW} are satisfied, and the sequence $\{x^{(k)}\}_k$ converges to a stationary point $x^*$ of the fixed-$y$ problem and identifies its support in finite time.

It remains to verify stationarity with respect to the $y$-component. Since $y^{(k+1)}=y^{(k)}=y^*$ for every sufficiently large $k$, step~\ref{step:FWa2} of Algorithm~\ref{ FW$s$P} gives
\begin{align*}
    \nabla_y h_G(x^{(k+1)},y^*)^\top(\bar y-y^*)\leq0
    \qquad\text{for every }\bar y\in P'_s(G).
\end{align*}
Passing to the limit, we obtain
\begin{align*}
    \nabla_y h_G(x^*,y^*)^\top(\bar y-y^*)\leq0
    \qquad\text{for every }\bar y\in P'_s(G).
\end{align*}
Combining this inequality with the stationarity of $x^*$ for the fixed-$y$ problem shows that
$\nabla h_G(x^*,y^*)^\top
    \big((\bar x,\bar y)-(x^*,y^*)\big)\leq0 $
for every $(\bar x,\bar y)\in\Delta_n\times P'_s(G)$. Hence, $(x^*,y^*)$ is a first-order stationary point in the sense of Definition~\ref{def_stationary_point}. Finally, since $y^{(k)}$ becomes constant and the AFW method identifies $\operatorname{supp}(x^*)$ in finite time, the complete sequence converges to $(x^*,y^*)$ and identifies the support of its $x$-component in finite time.
% Thus, every principal submatrix of $Q_{G^*}$ is nonsingular. This is the nonsingularity property used in the proof of Theorem~$12$ in \cite{bomze1999genetic}; consequently, by Theorem~$8$ therein, there can be at most one stationary point in the relative interior of each face of the standard simplex when $y^*$ is fixed.
% For $\alpha = 1$, by the proof of Theorem $12$ in \cite{bomze1999genetic}, we find that all principal minors of the auxiliary induced matrix $A_{G^*}$ do not vanish and, consequently, by Theorem $8$ in \cite{bomze1999genetic}, there can be at most one stationary point in the relative interior of any face of the standard simplex for $y^*$ fixed. 
    % Furthermore, by Theorem $2.5$ in \cite{massaro2002complementary}, the strict complementarity conditions hold in all the stationary points of the complete formulation. In particular, there exist a finite number of stationary points, all satisfying strict complementarity conditions and with distinct supports. Finally, we have all the assumptions to conclude by Theorem $4.5$ in \cite{AFW} and the properties of the AFW algorithm over the standard simplex.
\end{proof}

Although all local maximizers of formulation~\eqref{general_regularized_formulation_MsPP} correspond to $s$-plexes by Theorem~\ref{general_local_equiv_prop}, Theorem~\ref{th_convergence} does not rule out convergence of Algorithm~\ref{ FW$s$P} to saddle points. This possibility, however, does not appear to be problematic in practice. First, in our numerical tests, the method always converged to a local maximizer, in line with studies showing that many first-order methods avoid saddle points with high probability; see, e.g., \cite{bomze1999genetic,lee2019first}. Second, while local maximizers may be locally attractive, saddle points are typically unstable and are unlikely to attract the iterates of a strictly monotone ascent method under standard regularity conditions. Lastly, for our specific problem, there are inexpensive strategies to escape saddle points even when the starting point is ``unlucky,'' for example, when it coincides with a saddle point.

We now describe one such strategy for Algorithm \ref{ FW$s$P}, to be applied after the $y$-component has stabilized at a binary vertex $y^*$, and if the $\operatorname{supp}(x^{(k)})$ is not yet a clique in the auxiliary graph $G^*$.  The first step is to select a pair of nonadjacent vertices
$\{i,j\}\in
\binom{\operatorname{supp}(x^{(k)})}{2}
\setminus\big(E\cup\operatorname{supp}(y^*)\big),
$
an operation that requires checking at most $\binom{|\operatorname{supp}(x^{(k)})|}{2}$ entries of the adjacency matrix of $G^*$.
% The first step is to select an edge $\{i,j\}\in\binom{\operatorname{supp}(x^{(k)})}{2}\setminus E$, an operation that requires checking at most $\binom{\operatorname{supp}(x^{(k)})}{2}$ entries of the adjacency matrix.
Assuming without loss of generality that $\frac{\partial}{\partial x_i}h_G(x^{(k)}, y^{*})\leq\frac{\partial}{\partial x_j}h_G(x^{(k)}, y^{*})$,
the second step is to replace $x_i^{(k)}$ and $x_j^{(k)}$ with $(1-\varepsilon)x_i^{(k)}$ and $x_j^{(k)}+\varepsilon x_i^{(k)}$, respectively, for some fixed $\varepsilon\in(0,1]$. Denoting the resulting point by $\tilde x$ and setting $\delta:=\varepsilon x_i^{(k)}>0$, the quadratic structure of the objective gives
\begin{align*}
h_G(\tilde x,y^*)-h_G(x^{(k)},y^*)
=
\delta\left(
\frac{\partial h_G}{\partial x_j}(x^{(k)},y^*)
-
\frac{\partial h_G}{\partial x_i}(x^{(k)},y^*)
\right)
+\alpha\delta^2>0,
\end{align*}
where the last term follows from the fact that $\{i,j\}$ is a missing edge of $G^*$. The point $(\tilde x,y^*)$ can therefore be used as a new starting point for Algorithm~\ref{ FW$s$P}. In particular, if $(x^{(k)},y^*)$ is a saddle point, a strictly monotone restart cannot converge back to the same saddle point.

% the second step is to replace $x_i^{(k)}$ and $x_j^{(k)}$ with $(1 - \IB{\varepsilon})x_i^{(k)}$ and $x_j^{(k)} + \IB{\varepsilon} x_i^{(k)}$, respectively, for some fixed $\IB{\varepsilon}\in(0,1]$. The resulting point can then be used as a new starting point for Algorithm \ref{ FW$s$P}. It is not difficult to prove that if $(x^{(k)}, y^{(k)})$ is close enough to a saddle point, then the algorithm escapes it after restarting.

Let $y^*$ be a fixed binary vertex of $P'_s(G)$ found by Algorithm \ref{ FW$s$P} after a finite number of iterations, and denote as usual by $G^*$ the auxiliary graph induced by $y^*$. For a clique $C$ of $G^*$ we define $m(C,G^*)=
\min_{v\in V\setminus C} |C| - |N^*(v)\cap C|,
$
that is, the minimum number of missing edges between an
outside vertex $v$ and $C$ that would have to be added to make
$C\cup\{v\}$ a clique.
% the minimum number of edges needed to increase by $1$ the size of the clique. 

We now give an explicit bound on how close the sequence $\{x^{(k)}\}_k$ generated by Algorithm \ref{ FW$s$P} must be to the characteristic vector $x(C)$ for identification to occur.
\begin{proposition}
    Let $\{(x^{(k)}, y^{(k)})\}_k$ be a sequence generated by Algorithm \ref{ FW$s$P}, $y^*$ a binary vertex of $P'_s(G)$ and $C$ a clique in the auxiliary graph $G^*$. Let $\delta_{max}$ be the largest eigenvalue of the the adjacency matrix related to the auxiliary graph $A_{G^*}:=A_G + A_{\bar{G}}(y^*)$. Let $k^*$ be a fixed index in $\mathbb{N}_0$ and $I^c$ be the components of $\operatorname{supp}(x^{(k^*)})$ with index not in $C$, and let $L:=2\delta_{max} + \alpha$. Assume that $y^{(k^*+j)}=y^*$ is constant for $0\leq j\leq |I^c|$, that \eqref{cond1_stepsize} holds for $c=\frac{1}{L}$, and that
    \begin{align}\label{bound}
        \|x^{(k^*)} - x(C)\|_1\leq \frac{m_\alpha(C, G^*)}{m_\alpha(C, G^*) + 2|C|\delta_{max} + |C|\alpha}
    \end{align}
    for $m_\alpha(C, G^*) = m(C, G^*) - 1 + \frac{\alpha}{2}$. Then, the $\operatorname{supp}(x^{(k^* + |I^c|)})=C$.
\end{proposition}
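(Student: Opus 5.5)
The plan is to work entirely in the fixed-$y$ regime. By assumption $y^{(k^*+j)}=y^*$ for $0\le j\le |I^c|$, so on these iterations Algorithm~\ref{ FW$s$P} is just AFW (Algorithm~\ref{AFW}) applied to
\[
f(x):=h_G(x,y^*)=x^\top A_{G^*}x+\tfrac{\alpha}{2}\|x\|_2^2,
\qquad
\nabla f(x)=(2A_{G^*}+\alpha\mathbb I_n)x .
\]
Its gradient is $L$-Lipschitz with $L=2\delta_{max}+\alpha$. The argument is an induction on $j$. At each of these iterations I will show two things: (a) the AFW step is an away step from a vertex in $I^c$; and (b) the step is a maximal (drop) step, so this vertex leaves the support and no vertex outside $C$ enters it. After $|I^c|$ iterations the support is then contained in $C$. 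The requirement $\operatorname{supp}=C$ exactly will need separate care at the end.

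\textbf{Gradient gap at $x(C)$.} First I compute the gradient at $x(C)$. For $i\in C$, since $C$ is a clique of $G^*$,
\[
\nabla_i f(x(C))=\frac{2(|C|-1)+\alpha}{|C|}.
\]
For $v\notin C$, the definition of $m(C,G^*)$ gives
\[
\nabla_v f(x(C))=\frac{2|N^*(v)\cap C|}{|C|}\le\frac{2(|C|-m(C,G^*))}{|C|}.
\]
Hence every component in $C$ exceeds every component outside $C$ by at least $2m_\alpha(C,G^*)/|C|$.

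\textbf{Perturbation bound.} Next I transfer this gap to nearby points. The entries of $A_{G^*}$ lie in $\{0,1\}$, so
\[
\|\nabla f(x)-\nabla f(x(C))\|_\infty\le (2+\alpha)\|x-x(C)\|_1 .
\]
Alternatively, this can be bounded through $\delta_{max}$ to match the constant $2|C|\delta_{max}+|C|\alpha$ appearing in \eqref{bound}. Under \eqref{bound}, this should yield the following for the current iterate $x^{(k)}$: the largest gradient component is attained in $C$, so any FW vertex lies in $C$; the smallest component over the support is attained at some $v\in I^c$; and the away gap $\nabla f(x^{(k)})^\top d_A=\nabla f(x^{(k)})^\top x^{(k)}-\nabla_v f(x^{(k)})$ dominates the FW gap. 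I will obtain the last point by writing $x^{(k)}$ as a mass $1-\eta$ on $C$ plus a mass $\eta=\sum_{I^c}x_i\le\|x^{(k)}-x(C)\|_1$ outside $C$. The FW gap is then at most of order $\eta$ times the spread of gradients plus the within-$C$ deviation, whereas the away gap is at least roughly the gap $2m_\alpha/|C|$ minus the perturbation. The precise form of \eqref{bound}, with $m_\alpha$ in numerator and denominator, suggests that the inequality ``away gap $\ge$ FW gap'' reduces exactly to it.

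\textbf{Full step.} Then I show that the away step is a full drop step. Using \eqref{cond1_stepsize} with $c=1/L$, it suffices that
\[
\frac{\nabla f(x^{(k)})^\top d_A}{L\|d_A\|_2^2}\ \ge\ \alpha^{\max}_k=\frac{x_v}{1-x_v}.
\]
Here $\|d_A\|_2^2\le(1-x_v)^2+\|x\|^2$ is bounded, and $x_v\le\eta$ is small. Then $\alpha_k=\alpha_k^{\max}$ and $v$ is removed. Since $\alpha_k\ge\bar\alpha_k=\alpha_k^{\max}$, this holds for any admissible step.

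\textbf{Main obstacle.} The hardest step is propagating the hypothesis along the induction. After an away step, weight $x_v$ is redistributed proportionally to the remaining support, including the other outside vertices. I must check that $\|x^{(k+1)}-x(C)\|_1$, or at least the quantities actually used (the outside mass $\eta$ and the deviation of $x_C$ from uniform), do not deteriorate beyond \eqref{bound}. I would first try to prove that the outside mass strictly decreases (it becomes $(\eta-x_v)/(1-x_v)\le\eta$) and that the relative configuration on $C$ only rescales. If so, the gap inequalities, which are homogeneous in these quantities, are preserved. Finally, with support inside $C$, I would argue that components of $C$ are never dropped within these steps, since no away step from $C$ is taken while $I^c\neq\emptyset$ and $\operatorname{supp}(x^{(k^*)})\supseteq C$ near $x(C)$. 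This gives $\operatorname{supp}(x^{(k^*+|I^c|)})=C$.
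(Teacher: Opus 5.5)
Your strategy is the paper's. The paper freezes $y=y^*$ and defers to the AFW identification argument of \cite{s-defective}. With your (correct) gap $\delta:=2m_\alpha(C,G^*)/|C|$ and $L=2\delta_{max}+\alpha$, the right-hand side of \eqref{bound} is exactly the identification radius $\delta/(\delta+2L)$.

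The gap is that the decisive inequalities are only asserted (``should yield'', ``suggests that \dots\ reduces exactly''), and the estimates you propose do not reach this radius. Put $\epsilon:=\|x-x(C)\|_1$ and $\eta:=\sum_{i\notin C}x_i\le\epsilon/2$. If you bound gradient components by $L\epsilon$, and hence differences by $2L\epsilon$, your decomposition only gives $(\text{away gap})-(\text{FW gap})\ge\delta-\epsilon(\delta+4L)+2L\epsilon^2$. This is negative at $\epsilon=\delta/(\delta+2L)$, so the argument does not close. Your constant $2+\alpha$ has the same problem when $\delta_{max}$ is small.

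What you need is a direct bound on gradient \emph{differences}. Since $d=x-x(C)$ sums to zero, $\|d\|_2\le\|d\|_1/\sqrt2$, hence $|(e_i-e_l)^\top(2A_{G^*}+\alpha\mathbb{I}_n)d|\le L\epsilon$. It follows that:
\begin{itemize}
\item $\nabla_l f(x)-\nabla_i f(x)\ge\delta-L\epsilon>0$ for $l\in C$, $i\notin C$, so the FW vertex lies in $C$ and the away vertex $v$ lies outside $C$;
\item the away gap is at least $(1-\eta)(\delta-L\epsilon)$;
\item $(\text{away gap})-(\text{FW gap})\ge(1-2\eta)(\delta-L\epsilon)-L\epsilon\ge\delta-\epsilon(\delta+2L)+L\epsilon^2>0$, and this is precisely where \eqref{bound} enters.
\end{itemize}
For the full step, ``$\|d_A\|$ bounded, $x_v$ small'' is not enough. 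Use $\|d_A\|_2^2\le2(1-x_v)^2$ and $x_v\le\eta$. Then $\bar\alpha_k=\alpha_k^{\max}$ follows from $(1-\eta)(\delta-L\epsilon)\ge2L\eta(1-\eta)$, which in turn follows from $\delta\ge2L\epsilon$, and \eqref{bound} implies this.

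Your induction invariant is also misidentified. The gap inequalities are not homogeneous in the quantities you track. Moreover, the deviation of $x_C$ from uniform can grow under the rescaling: for $C=\{1,2\}$ and $x=(0.6,0.3,0.1)$ it goes from $0.3$ to $1/3$. The right invariant is the hypothesis itself. The drop step is $x^+=(x-x_ve_v)/(1-x_v)$ with $v\notin C$, and the triangle inequality gives
\[
\|x^+-x(C)\|_1\le\sum_{i\neq v}|x_i-x(C)_i|+x_v=\|x-x(C)\|_1 ,
\]
so \eqref{bound} holds at every subsequent iterate, and each iteration drops exactly one vertex of $I^c$.

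Finally, $C\subseteq\operatorname{supp}(x^{(k^*)})$ must be derived, not assumed. A zero coordinate in $C$ would force $\|x^{(k^*)}-x(C)\|_1\ge 2/|C|$. This exceeds the right-hand side of \eqref{bound}, using $m_\alpha<|C|$ and $\delta_{max}\ge|C|-1$. Afterwards the $C$-coordinates are only scaled up, which yields $\operatorname{supp}(x^{(k^*+|I^c|)})=C$.
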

\begin{proof}
    Since after a certain number of iterations the $y$ component of the sequence is fixed, the proof of the above result is identical to the proof reported 
    in~\cite[Proposition~5.4, Appendix]{s-defective}.
\end{proof}
\begin{remark}
    It is well-known that for any graph the largest eigenvalue $\delta_{max}$ of the adjacency matrix is less than or equal to $d_{max}$ as the maximum vertex degree of $G^*$ (see, e.g., \cite{cvetkovic1990largest}). Then condition \eqref{bound} can be replaced by
    \begin{align*}
        \|x^{(k^*)} - x(C)\|_1\leq \frac{m_\alpha(C, G^*)}{m_\alpha(C, G^*) + 2|C|d_{max} + |C|\alpha}.
    \end{align*}
\end{remark}

\section{Numerical results}\label{section_6}
This section presents the results of the computational experiments conducted with the newly proposed  FW$s$P method, Algorithm~\ref{ FW$s$P}. All experiments use the specific objective function introduced in~\eqref{particular_objective}. We also provide a numerical comparison between the original formulation~\eqref{MS_splex_formulation} and its regularized counterpart~\eqref{general_regularized_formulation_MsPP}. We remark that, although the method finds local maxima whose $x$ supports are $s$-plexes rather than necessarily maximum $s$-plexes in practice, it can be used as a heuristic to obtain lower bounds on the $s$-plex number, either independently or within a global optimization scheme. The experiments were conducted with the following objectives:
\begin{itemize}
    \item to show that the proposed FW variant performs better than the general-purpose solver CONOPT when applied to the regularized problems, both in terms of CPU time and the quality of the solutions found; 
    \item to show that our approach, based on solving the regularized formulation~\eqref{general_regularized_formulation_MsPP} with the FW$s$P algorithm, finds solutions of comparable quality to those obtained by applying the general-purpose solver CONOPT directly to the original cubic MS formulation~\eqref{MS_splex_formulation}, while requiring shorter running times and avoiding failures in the identification of maximal $s$-plexes.
\end{itemize}

In the experiments, the regularization parameters were set to $\alpha=1$, $\gamma = 1 + \frac{1}{n}$ and $\beta = \frac{1}{10\cdot \omega(G)^2}$, where $\omega(G)$ is the best-known clique number\footnote{If  the clique number is not  known, this scaling would require replacing $\omega(G)$ with an available estimate.} for the given graph instance~$G$.
% The choice of $\beta$ is an empirical, instance-dependent heuristic intended to moderate the strength of the regularization and improve the practical behavior of the algorithm. We emphasize that this value lies outside the sufficient theoretical regime $\beta>2$ required by condition~\ref{cond_R2}. Consequently, the theoretical guarantees excluding half-integral local maximizers and ensuring a binary missing-edge component do not directly apply to these experimental runs. The reported results should therefore be interpreted as empirical evidence of the practical effectiveness of this parameter choice, rather than as a numerical verification of the theoretical equivalence results.  
An intuitive motivation for this empirical choice of $\beta$ can be obtained by revisiting the argument used in the proof of Theorem~\ref{general_local_equiv_prop}. For a fractional component $y_{ij}^*=1/2$, that argument excludes local optimality whenever $2x_i^*x_j^*+\psi'\left(\frac{1}{2}\right)<0.$
When considering  the regularization function in~\eqref{particular_objective}, we have $\psi'(1/2)=-\beta/4$, and hence the above inequality is equivalent to $\beta>8x_i^*x_j^*.$
The condition $\beta>2$ follows from the uniform bound $x_i^*x_j^*\leq1/4$ over the entire standard simplex. In practice, however, we expect the iterates to approach the normalized characteristic vector of a maximum $s$-plex $C^*$. At such a point, the nonzero products $x_i^*x_j^*$ are equal to $1/|C^*|^2$. Moreover, since every clique is also an $s$-plex, $|C^*|\geq \omega(G)$ and therefore
$\frac{1}{|C^*|^2}\leq\frac{1}{\omega(G)^2}.$
This observation motivates, at a heuristic level, scaling $\beta$ as $\frac{1}{10\cdot \omega(G)^2}$   in our experiments, thereby avoiding an excessively strong regularization effect. Although this argument does not provide the uniform guarantee ensured by $\beta>2$, it offers an intuitive justification for the parameter scaling adopted in the computational experiments.

% An intuitive motivation for this choice of $\beta$, even if it is not greater than $2$ as required in (\ref{particular_objective}), can be given by the fact that we expect the $x$-component of a local maximizer to converge to the characteristic vector of a maximum $s$-plex $C^*$ in the graph $G$, where we know $|C^*|\geq \omega(G)$ since for a parameter $s>1$ we have a proper relaxation of the clique structure. Thus, in the proof of Proposition \ref{general_local_equiv_prop} we could substitute the non-null values of the characteristic vector $x^*$ with $\frac{1}{c\cdot \omega(G)^2}$, for a parameter $c>0$, when showing that we do not have fractional $y$-values for a local maximizer. 

We used the step-size $\alpha_k = \bar{\alpha}_k$ with $\bar{\alpha}_k$ given by (\ref{cond1_stepsize}) for $c=1$, corresponding to an estimate for the Lipschitz constant $L$ of $\nabla_x h_G$. The Short Step Chain (SSC) procedure was adopted in the algorithm, as a gradient recycling scheme to use first-order information more efficiently (see \cite{rinaldi2023avoiding} for details). The code was written in MATLAB, and the tests were performed on an Intel(R) Core(TM) Ultra 7 155H (1.40 GHz)  32 GB. The $50$ graph instances that we used in our experiments are taken from the second DIMACS Implementation Challenge \cite{johnson1996cliques}. These graphs are a common benchmark to assess the performance of algorithms for maximum relaxed clique problems (see \cite{s-defective} and \cite{cubic} and the references therein), and the particular instances we selected coincide with the ones employed in \cite{cubic} to ensure a fair comparison of the performance. Following the scheme adopted in \cite{cubic}, for each pair $(G,s)$, with $G$ a graph from the $50$ selected instances and $s$ an integer in $\{2,3,4,5\}$, we imposed a cumulative time limit of $600$ seconds. Within this time budget, the method was repeatedly run from independently generated random starting points until either $100$ restarts had been completed or the time limit had been reached, whichever occurred first.
% we set a global time limit of $600$ seconds and employed a simple restart scheme with up to $100$ random starting points. 
In particular, the $x$-component of the starting point was generated with MATLAB function \texttt{rand()} and then normalized by dividing it by its sum. A slightly different procedure was adopted for the choice of the starting $y$-component: random values, again generated by the \texttt{rand()} function of MATLAB, were assigned only to the entries of $y$ corresponding to a maximum fractional $(s-1)e$-matching in the complementary graph of the instance. This initialization ensures that the resulting $y$-component is both feasible and randomly generated. To measure approximate stationarity in the numerical experiments, we use the FW gap associated with Definition~\ref{def_stationary_point}. Let $F$ denote the objective
function of the formulation under consideration and let
$\Omega:=\Delta_n\times P'_s(G)$. For any $z=(x,y)\in\Omega$, we define
\begin{align*}
    g_{\mathrm{FW}}^F(z)
    :&=
    \max_{(u,v)\in\Omega}
    \nabla F(x,y)^\top\big((u,v)-(x,y)\big),\\
    &=
    \max_{u\in\Delta_n}
    \nabla_xF(x,y)^\top(u-x)\ 
    +
    \max_{v\in P'_s(G)}
    \nabla_yF(x,y)^\top(v-y).
\end{align*}
The gap is non-negative, and $g_{\mathrm{FW}}^F(z)=0$ if and only if $z$ is stationary in the sense of Definition~\ref{def_stationary_point}. 
Finally, following~\cite{s-defective}, termination of the FW$s$P algorithm required two conditions: the FW gap defined above had to be less than or equal to the prescribed tolerance $\varepsilon:=2\cdot10^{-3}$, and the current support of the $x$-component had to induce an $s$-plex. With this stopping rule, the FW$s$P method satisfied both conditions in all the tested runs, and no support-identification failures were observed. These numerical results are consistent with the local equivalence properties established in this paper.

% \textcolor{red}{Finally, as done in \cite{s-defective}, to terminate the  FW$s$P algorithm two conditions are needed: the presented FW gap is less than or equal to a tolerance set at $\IB{\varepsilon}:=2\cdot10^{-3}$, and the current support of the $x$-component induces an $s$-plex. In particular, with these choices, the experiments show that the  FW$s$P method always terminates after having identified an $s$-plex, thus providing an empirical verification of the local equivalence results we proved in this paper. } 
%In particular, with these choices, the experiments show that the  FW$s$P always terminates after having identified an $s$-plex when applied to formulation (\ref{general_regularized_formulation_MsPP}), while a significant number of failures are observed when applied instead to the original MS formula (\ref{MS_splex_formulation}), thus providing an empirical verification of the local equivalence results we proved in this paper.
For the CONOPT solver the default random starting values and stopping criterion were used. We note that no postprocessing or repair procedure was applied to the solutions returned by either CONOPT or  FW$s$P. This choice was deliberate, since the purpose of the comparison is to assess the intrinsic ability of the original and regularized formulations to produce solutions whose supports directly identify $s$-plexes. Applying a repair procedure could improve the final combinatorial solutions, but would conceal the support-identification failures that the proposed regularization is designed to eliminate.

In the box plots presented below, each series consists of $50$ values corresponding to the aggregated data for the runs performed on the $50$ instances. Here we list the strategies considered in our experiments:
\begin{itemize}
    \item Strategy~$1$ (abbreviated as $S1$) is our proposed approach, in which the regularized formulation~\eqref{general_regularized_formulation_MsPP} is solved using the FW$s$P algorithm (Algorithm~\ref{ FW$s$P}) with the parameters specified above;
    
    \item Strategy~$2$ (abbreviated as $S2$) consists of solving the regularized formulation~\eqref{general_regularized_formulation_MsPP} directly with the general-purpose solver CONOPT;
    
    \item Strategy~$3$ (abbreviated as $S3$) consists of solving the original cubic MS formulation~\eqref{MS_splex_formulation} directly with CONOPT, in line with the approach in~\cite{cubic}.
\end{itemize}
% \textcolor{red}{\begin{itemize}
%     \item Strategy $1$ (abbreviated $S1$) corresponds to our proposed approach to solve the regularized version (\ref{general_regularized_formulation_MsPP}) using the  FW$s$P Algorithm \ref{ FW$s$P} with the parameters reported above;
%     \item Strategy $2$ and Strategy $3$ (abbreviated $S2$ and $S3$) consists of solving the regularized problem (\ref{general_regularized_formulation_MsPP}) and the original one (\ref{MS_splex_formulation}), respectively, directly using the CONOPT solver, following the choice of \cite{cubic}.
% \end{itemize}}
The red lines represent the median of the values in each series, and the boxes extend from the $25$th percentile of the observed data to the $75$th percentile. The whiskers extend to the most extreme observations within the prescribed interquartile range, while values outside this range are displayed as outliers.
% The whiskers cover all other values in a range of $[q1 - w(q3 - q1), q3 + w(q3 - q1)]$, with the coefficient $w$ equal to $2.7$ times the standard deviation of the values. 
% \begin{figure}[ht]
%     \centering
%     \includegraphics[width=1\linewidth]{table_size.eps}
%     \caption{$Si$-$j$ is the box plot related to the average $s$-plex size found for the instance by strategy $Si$ for parameter $s=j$, divided by the known clique number of the instance.}
%     \label{size_results}
% \end{figure}
In Figure \ref{size_results}, the box plot $Si$-$j$ represents the distribution of the average cardinality of the $s$-plex found by Strategy $Si$ with parameter $s=j$, divided by the best-known clique number of the instance. 
To distinguish the algorithms visually, the box plots corresponding to the FW$s$P method ($S1$) are shown in green, whereas those corresponding to CONOPT ($S2$ and $S3$) are shown in blue.
% \textcolor{red}{To highlight the different algorithms tested, we decided to mark the box-plots for the  FW$s$P method in green, $S1$, while the blue was assigned to the boxes of CONOPT solver, $S2$ and $S3$.}
Notice that some normalized values exceed $1$. This is expected because, for $s>1$, the $s$-plex model is a relaxation of the clique model, and an $s$-plex may therefore have cardinality larger than the clique number. The solutions obtained by applying the FW$s$P algorithm to the regularized formulation~\eqref{general_regularized_formulation_MsPP} ($S1$) are generally larger than those returned by CONOPT for the same formulation ($S2$). Moreover,  the median solution sizes obtained by $S1$ and by CONOPT applied to the original cubic MS formulation~\eqref{MS_splex_formulation} ($S3$) are comparable.

% Notice that some data points are greater than $1$, as expected since for $s>1$ the cardinality of an $s$-plex can exceed the clique number as a proper relaxation of the original clique model. \textcolor{red}{The solutions obtained using the  FW$s$P algorithm for the regularized problem (\ref{general_regularized_formulation_MsPP}), $S1$, are generally better than the ones obtained using the CONOPT solver on the same regularized problem, $S2$. Furthermore, while the variance is higher, no significant differences can be seen in the median quality of the solutions found by $S1$ and CONOPT applied to the MS cubic formulation (\ref{MS_splex_formulation}), $S3$.}
%On the other hand, the quality of the solutions obtained using the CONOPT solver combined with a post-processing algorithm, $S4$, is slightly better than that achieved by the other strategies. 
% \begin{figure}[ht]
%     \centering
%     \includegraphics[width=1\linewidth]{table_time.eps}
%     \caption{$Si$-$j$ is the box plot related to the average running time (in seconds) for strategy $Si$ with parameter $s=j$.}
%     \label{time_results}
% \end{figure}

\begin{figure}[ht]
    \centering

    \begin{subfigure}[t]{0.49\linewidth}
        \centering
        \includegraphics[width=\linewidth]{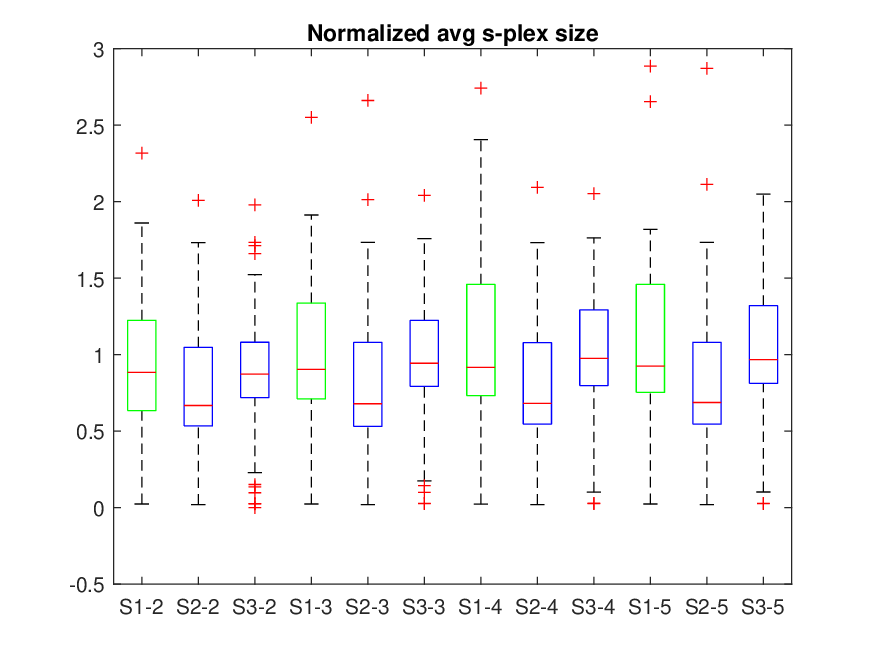}
        \caption{The box plot $Si$-$j$ reports the average $s$-plex size
        found by strategy $Si$ for $s=j$, divided by the best-known clique
        number of the instance.}
        \label{size_results}
    \end{subfigure}
    \hfill
    \begin{subfigure}[t]{0.49\linewidth}
        \centering
        \includegraphics[width=\linewidth]{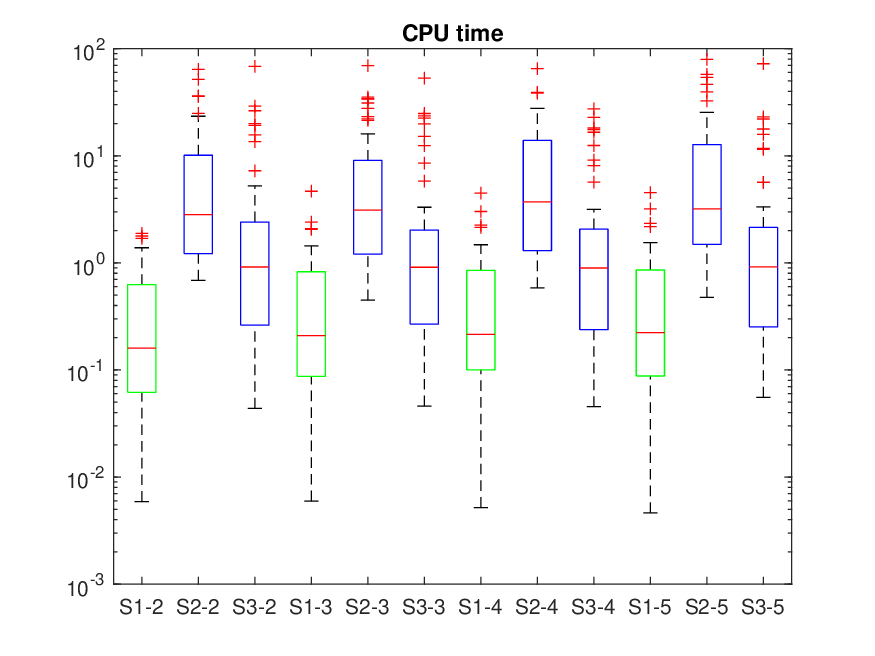}
        \caption{The box plot $Si$-$j$ reports the average running time
        (in seconds) of strategy $Si$ for $s=j$.}
        \label{time_results}
    \end{subfigure}

    \caption{Comparison of the average $s$-plex size and running time
    for the considered strategies.}
    \label{fig:size_time_results}
\end{figure}

In Figure \ref{time_results}, $Si$-$j$ represents the distribution of average CPU  times in seconds (on a logarithmic scale, explaining the asymmetry of the box plots) of Strategy $Si$ for parameter $s=j$.
As in the previous figure, green denotes the FW$s$P method, whereas blue denotes the CONOPT solver. Strategy~$S1$ outperforms $S2$ and $S3$ in terms of computational efficiency, exhibiting a substantially lower median execution time. Moreover, $S1$ completes all $100$ runs within the prescribed time limit of $600$ seconds for every tested instance, whereas $S2$ and $S3$ fail to reach $100$ runs within this time limit for some instances.
% As done previously, we highlight the different algorithms using green for the FW$s$p method, and blue for CONOPT solver. Here we can see that strategy $S1$ outperforms the others, as it requires a much smaller median execution time. \textcolor{red}{Furthermore, the detailed results reported in Appendix \ref{appendix} show that this strategy is always able to complete the $100$ runs within the imposed time limit of $600$ seconds, while, this condition is not met for some instances examined using strategies $S2$ and $S3$.}
%Furthermore, notice that, given the presence of the post processing routines, strategy $S4$ turns out to be less robust than the others, since the variance of the CPU time is larger compared to the others boxes.
Summarizing, the reported results indicate that the proposed Strategy $S1$, based on the regularized version (\ref{general_regularized_formulation_MsPP}) and a tailored FW variant,  is a viable alternative when searching for maximal $s$-plexes in a graph. We refer the reader to the GitHub repository  for detailed numerical results. 

\section{Conclusion}
\label{section_7}
We introduced a general continuous regularization framework for the maximum $s$-plex problem, designed to eliminate the spurious maximizers admitted by the cubic Motzkin-Straus extension \cite{cubic}. By exploiting the two-block structure of the formulation (i.e., the vertex variables and the missing-edge variables) together with the half-integral, odd-cycle structure of the fractional $b$-matching polytope, we derived sufficient conditions under which every local maximizer corresponds to an $s$-plex and every global maximizer yields a maximum $s$-plex. We also presented several families of regularization functions satisfying these conditions.

To solve the resulting formulation, we developed a tailored block-coordinate first-order algorithm that combines away-step Frank-Wolfe updates on the standard simplex with linear optimization over the fractional $b$-matching polytope. Under the stated assumptions, we established finite stabilization of the missing-edge variables, convergence to first-order stationary points, and finite identification of the support of the vertex variables. Experiments on standard benchmark graphs indicate that the tested regularized formulation avoids the support-identification failures observed for the original cubic model and returns high-quality $s$-plexes at a low computational cost.

Several directions merit further investigation. A first question is whether the theoretical guarantees can be retained under weaker or data-dependent regularization conditions, leading to adaptive parameter-selection rules. It would also be valuable to extend the framework to other clique-relaxation and network-clustering models. From a global-optimization perspective, the proposed method could be embedded in multistart, decomposition, or branch-and-bound schemes, where its fast local solutions could provide strong lower bounds. Finally, recent exact continuous formulations for broader classes of graph problems \cite{Butenko2026Continuous} suggest opportunities for combining structure-aware regularization with scalable and parallel first-order algorithms.
\newpage

\bibliographystyle{siamplain}
\bibliography{references_new}

\end{document}